\newcommand{\om}{\omega}
\newcommand{\Om}{\Omega}
\renewcommand{\phi}{\varphi}
\renewcommand{\rho}{\varrho}
\renewcommand{\epsilon}{\varepsilon}
\newcommand{\cD}{\mathcal{D}}
\newcommand{\cE}{\mathcal{E}}
\newcommand{\cF}{\mathcal{F}}
\newcommand{\R}{\mathds{R}}
\newcommand{\dx}{{\mathrm d}}
\newcommand{\Id}{\mathrm{Id}}
\newcommand{\I}{\mathds{1}}
\newcommand{\ew}{\mathbb{E}}
\newcommand{\re}{\mathrm{e}}
\newcommand{\ri}{\mathrm{i}}
\newtheorem{theorem}{Theorem}        
\newtheorem{lemma}[theorem]{Lemma}             
\newtheorem{corollary}[theorem]{Corollary}
\newtheorem{remark}[theorem]{Remark}
\newtheorem{proposition}[theorem]{Proposition}
\begin{document}

\title{Short-time behavior of solutions to Lévy-driven SDEs}
\author{Jana Reker\thanks{Ulm University, Institute of Mathematical Finance, 89081 Ulm, Germany, e-mail: jana.reker@uni-ulm.de}}
\maketitle
\begin{abstract}
We consider solutions of Lévy-driven stochastic differential equations of the form~$\dx X_t=\sigma(X_{t-})\dx L_t$, $X_0=x$ where the function $\sigma$ is twice continuously differentiable and maximal of linear growth and the driving L\'evy process $L=(L_t)_{t\geq0}$ is either vector or matrix-valued. While the almost sure short-time behavior of Lévy processes is well-known and can be characterized in terms of the characteristic triplet, there is no complete characterization of the behavior of the process $X$. Using methods from stochastic calculus, we derive limiting results for stochastic integrals of the from~$\smash{t^{-p}\int_{0+}^t\sigma(X_{t-})\dx L_t}$ to show that the behavior of the quantity~$t^{-p}(X_t-X_0)$ for $t\downarrow0$ almost surely mirrors the behavior of $t^{-p}L_t$. Generalizing $t^p$ to a suitable function $f:[0,\infty)\rightarrow\R$ then yields a tool to derive explicit LIL-type results for the solution from the behavior of the driving Lévy process.
\end{abstract}

\noindent
{\em AMS 2010 Subject Classifications:} \, primary: 60G17;
secondary: 65C30, 60G51.

\noindent
{\em Keywords:}
Short-time behavior, Lévy-driven SDE, LIL-type results.

\section{Introduction}
We consider the almost sure short-time behavior of the stochastic process $X=(X_t)_{t\geq0}$ which is the solution of a stochastic differential equation (SDE) of the form
\begin{equation}\label{eq-SDE}
\dx X_t=\sigma(X_{t-})\dx L_t,\ X_0=x\in\R^n
\end{equation}
driven by an $\R^d$-valued L\'evy process $L=(L_t)_{t\geq0}$. The function $\sigma:\R^{n}\rightarrow\R^{n\times d}$ is chosen to be twice continously differentiable and maximal of linear growth, which ensure that~\eqref{eq-SDE} has a unique strong solution (see e.g.~\cite[Thm.~7, p.~259]{Protter2005}) and that the It$\overline{\mathrm{o}}$ formula is applicable for the process $X$. The aim of this paper is to compare the behavior of $X$ at small times to that of suitable functions. For real-valued Lévy processes, results by Shtatland~\cite{Shtatland1965} and Rogozin~\cite{Rogozin1968} characterize the almost sure convergence of the quotient~$L_t/t$ for $t\downarrow0$ in terms of the total variation of the paths of the process, which was generalized to determining the behavior of the quotient for arbitrary positive powers of~$t$ in~\cite{BlumenthalGetoor1961},~\cite{Pruitt1981} and~\cite{BertoinDoneyMaller2008} from the characteristic triplet. The exact scaling function~$f$ for law of the iterated logarithm-type (LIL-type) results of the form $\limsup_{t\downarrow0}L_t/f(t)=c$ a.s. for a deterministic constant $c$ was determined by Khinchine for Lévy processes that include a Gaussian component (see e.g.~\cite[Prop.~47.11]{Sato2013}) and in e.g.~\cite{Savov2009} and \cite{Savov2010} for more general types of Lévy processes. The multivariate counterpart to these LIL-type results was derived in the recent paper~\cite{Einmahl2019}, showing that the short-time behavior of the driving process in~\eqref{eq-SDE} is already well-understood. For the solution $X$, the situation becomes more difficult. It was shown in~\cite{SchillingSchnurr2010} and~\cite{Kuehn2018} that $X$ is a so-called Lévy-type Feller process, i.e. the characteristic function of $X_t$ can be expressed using a characteristic triplet similar to the driving Lévy process with the triplet $(A(x),\nu(x),\gamma(x))$ additionally depending on the initial condition $x\in\R^n$ and the function $\sigma$. The short and long-time behavior of such Feller processes can be characterized in terms of power-law functions using a generalization of Blumenthal-Getoor indices (see~\cite{Schnurr2013}), where the symbol now plays the role of the characteristic exponent. Using similar methods, an explicit short-time LIL in one dimension was derived in~\cite{KnopovaSchilling2014}. The definition of a Lévy-type Feller process suggests that one can think of $X$ as ''locally Lévy'' and, since the short-time behavior of the process is determined by the path behavior in an arbitrarily small neighborhood of zero, the process $X$ thus should directly mirror the short-time behavior of the driving Lévy process. We confirm this hypothesis in terms of power-law functions in Proposition~\ref{prop-aslimitthroughSDE} and Theorem~\ref{prop-asdivergencethroughSDE} by showing that the almost sure finiteness of $\lim_{t\downarrow0}t^{-p}L_t$ implies the almost sure convergence of the quantity~$t^{-p}(X_t-X_0)$ and that similar results hold for $\limsup_{t\downarrow0} t^{-p}(X_t-X_0)$ and $\liminf_{t\downarrow0}t^{-p}(X_t-X_0)$ with probability one whenever $\lim_{t\downarrow0}t^{-p/2}L_t$ exists almost surely. Using knowledge on the form of the scaling function for the driving Lévy process, the limit theorems can be generalized to suitable functions~$f:[0,\infty)\rightarrow\R$ to derive explicit LIL-type results for the solution of~\eqref{eq-SDE}. As another application, we will also briefly study convergence in distribution and in probability, showing that results on the short-time behavior of the driving process translate here as well.

\section{Preliminaries}
A L\'evy process $L=(L_t)_{t\geq0}$ is a stochastic process with stationary and independent increments the path of which are almost surely c\`adl\`ag, i.e. right-continuous with finite left-limits, and start in 0 with probability one. In the following analysis, we consider both $\R^d$-valued and $\R^{n\times d}$-valued Lévy processes. By the usual convention, we identify~$\R^d$ with~$\R^{d\times 1}$, i.e. the elements are interpreted as column vectors. The transpose of a vector or matrix $x$ is denoted by $x^T$. Further, $\langle\cdot,\cdot\rangle$ and $\|\cdot\|$ denote the Euclidean scalar product and Euclidean norm on $\R^d$, respectively unless otherwise specified. By the L\'evy-Khintchine formula (see e.g.~\cite[Thm. 8.1]{Sato2013}), the characteristic function of an $\R^d$-valued L\'evy process~$L$ is given by
\begin{displaymath}
\phi_{L}(z)=\ew \re^{\ri z L_t} = \exp (t \psi_L(z)) , \ z\in \R^d,
\end{displaymath}
where $\psi_L$ denotes the characteristic exponent satisfying
\begin{align*}
\psi_L(z)=-\frac{1}{2}\langle z,A_Lz\rangle+i\langle \gamma_L,z\rangle+\int_{\R^d}\big(\exp(i\langle z,s\rangle)-1-i\langle z,s\rangle\I_{\{\|s\|\leq1\}}\big)\nu_L(\dx s), \ z\in \R^d.
\end{align*}
Here, $A_L\in\R^{d\times d}$ is the Gaussian covariance matrix, $\nu_L$ is the L\'evy measure and $\gamma_L\in\R^d$ is the location parameter of $L$. The characteristic triplet of $L$ is denoted by $(A_L,\nu_L,\gamma_L)$. See e.g. \cite{Sato2013} for any further information on L\'evy processes. Any $\R^{n\times d}$-valued L\'evy process can be seen as an $\R^{nd}$-valued L\'evy process by vectorization such that the above representations are valid in matrix case as well. Using the usual convention, a matrix $m$ is vectorized by writing its entries column-wise into a vector, which we denote by $m^{vec}$. Properties of $\nu_L$ in dimension one are sometimes also given in terms of its tail function, in which case we write $\smash{\overline{\Pi}^{(+)}_L}(x)=\nu_L((x,\infty))$, $\smash{\overline{\Pi}^{(-)}_L}(x)=\nu_L((-\infty,-x))$ and $\smash{\overline{\Pi}}_L(x)=\smash{\overline{\Pi}^{(+)}_L}(x)+\smash{\overline{\Pi}^{(-)}_L}(x)$ for~$x>0$. In higher dimensions, the interval $(x,\infty)$ is replaced by the set $\{y: \|y\|>x\}$. The abbreviation ''a.s.'' means ''almost sure(ly)''.

\medskip
L\'evy processes form an important subclass of semimartingales. For any c\`{a}dl\`{a}g process $X$, we denote by $X_{s-}$ the left-hand limit of $X$ at time~${s\in(0,\infty)}$ and by $\Delta X_s=X_s-X_{s-}$ its jumps. The process $X_{s-}$ is càglàd, i.e. left-continuous with finite right limits. Any integrals are interpreted as integrals with respect to semimartingales as e.g. in~\cite{Protter2005} and we generally consider a filtered probability space $\bigl(\Om,\cF,(\cF_t)_{t\geq0},P)$ satisfying the usual hypotheses (see e.g.~\cite[p.~3]{Protter2005}). The integral bounds are assumed to be included when the notation $\smash{\int_a^b}$ is used and the exclusion of the left or right bound is denoted by~$\smash{\int_{a+}^b}$ or $\smash{\int_a^{b-}}$. Let~$X$, $Y$ and $Z$ be semimartingales taking values in $\R^{n\times d}$, $\R^{d\times m}$ and $\R^{m\times d}$, respectively. Integrals with respect to matrix-valued semimartingales are interpreted as
\begin{align*}
\Big(\int_{(a,b]}X_{s-}\dx Y_s\Big)_{i,j}=\sum_{k=1}^d\int_{(a,b]}(X_{i,k})_{s-}\dx (Y_{k,j})_s,\\
\Big(\int_{(a,b]}\dx Z_sX_{s-}\Big)_{i,j}=\sum_{k=1}^d\int_{(a,b]}(X_{k,j})_{s-}\dx (Z_{i,k})_s.
\end{align*}
Note that the properties of one type of the multivariate stochastic integral readily carry over to the other by transposition of the matrix-valued semimartingales. The integration by parts formula takes the form
\begin{displaymath}
\int_{(0,t]}X_{s-}\dx Y_s=X_tY_t-X_0Y_0-\int_{(0,t]}\dx X_sY_{s-}-[X,Y]_{0+}^t
\end{displaymath}
in the matrix case. Let $L=(L_t)_{t\geq0}$ be an $\R^{d\times d}$-valued L\'evy process or semimartingale and $\Id\in\R^{d\times d}$ denote the identity matrix. Then the (strong) solution ${X=(X_t)_{t\geq0}}$ to the stochastic differential equation (SDE)
\begin{displaymath}
 \dx X_t=X_{t-}\dx L_t,\ t>0,\quad X_0=\Id
\end{displaymath}
is called (left) stochastic exponential of $L$ and denoted by $\smash{\overset{\leftarrow}{\cE}}(L)$ while the solution $Y$ to the~SDE
\begin{displaymath}
 \dx Y_t=\dx L_tY_{t-},\ t>0,\quad Y_0=\Id
\end{displaymath}
is called right stochastic exponential of $L$ and denoted by $\smash{\overset{\rightarrow}{\cE}}(L)$. As observed for the integrals, properties of the process $X$ carry over to $Y$ by transposition and vice versa. Unless specified otherwise, the term ''stochastic exponential'' refers to the left stochastic exponential and we omit the arrow. 

\section{Main Results}
The aim of this paper is a characterization of the a.s. short-time behavior of the solution to a Lévy-driven SDE by relating it to the behavior of the driving process. Since the short-time behavior of Lévy processes is already well-studied, we can use results from~\cite{BertoinDoneyMaller2008} and~\cite{Einmahl2019} to gain detailed insight in the behavior of the solution, as well as a method to derive LIL-type results for many frequently-used models. Note that the results given partially overlap with characterizations obtained from other approaches such as the generalization of Blumenthal-Getoor indices for Lévy-type Feller processes dicussed e.g. in~\cite{Schnurr2013} while also covering new cases such as a.s. limits for $t\downarrow0$. Whenever possible, we work with general semimartingales and include converse results to reobtain the limiting behavior of the driving process from the solution. As a first step, we give a lemma that characterizes the a.s. short-time behavior of a stochastic integral when the behavior of the integrand is known.

\begin{lemma}\label{lem-convergence-integral} 
Let $X=(X_t)_{t\geq 0}$ be a real-valued semimartingale, $p>0$ and $\phi=(\phi_t)_{t\geq0}$ an adapted c\`{a}gl\`{a}d process such that $\lim_{t\downarrow0}t^{-p}\phi_t$ exists and is finite with probability one. Then
\begin{equation}\label{lemma-limit} 
\frac{1}{t^p}\int_{0+}^t\phi_s\dx X_s \rightarrow 0\ \mathrm{a.s.\ for}\ t\downarrow0.
\end{equation}
\end{lemma}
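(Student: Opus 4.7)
The plan is to rewrite the integrand as $\phi_s = s^p\cdot(\phi_s/s^p)$ so that the stochastic integral reduces to one with the deterministic integrand $s^p$, which can be handled by classical integration by parts.

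I would first set $\xi_s := \phi_s/s^p$ for $s > 0$. Since $\phi$ is adapted càglàd and the hypothesis forces $\xi_s\to c$ a.s.\ with $c$ a.s.\ finite, $\xi$ is an adapted càglàd process which is locally bounded on $[0,\infty)$. Hence $Y_t := \int_{0+}^t \xi_s\,\dx X_s$ is a well-defined càdlàg semimartingale with $Y_0 = 0$, and the associativity of the stochastic integral (see e.g.\ \cite{Protter2005}) would yield $\int_{0+}^t \phi_s\,\dx X_s = \int_{0+}^t s^p\,\dx Y_s$. Applying then deterministic integration by parts against the continuous finite variation function $s \mapsto s^p$,
\begin{equation*}
\int_{0+}^t s^p\,\dx Y_s = t^p Y_t - \int_0^t p\,s^{p-1} Y_s\,\dx s,
\end{equation*}
where the boundary term at $s=0$ vanishes because $0^p Y_0 = 0$.

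Dividing by $t^p$, the conclusion \eqref{lemma-limit} reduces to showing that both $Y_t$ and $t^{-p}\int_0^t p\,s^{p-1} Y_s\,\dx s$ tend to $0$ a.s.\ as $t \downarrow 0$. The first limit follows directly from the càdlàg property of $Y$ together with $Y_0 = 0$, while the second is dominated in absolute value by $\sup_{s\in[0,t]}|Y_s|$ via the trivial identity $\int_0^t p\,s^{p-1}\,\dx s = t^p$, and right-continuity of $Y$ at $0$ gives $\sup_{s\in[0,t]}|Y_s|\to 0$.

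The only technical point requiring care will be the associativity reduction: one must verify that $\xi$ is an admissible (càglàd and locally bounded) integrand. Local boundedness of $\xi$ near $0$ is precisely the content of the hypothesis that $\lim_{t\downarrow 0} t^{-p}\phi_t$ is a.s.\ finite, while local boundedness away from $0$ is immediate from the càglàd property of $\phi$ and the strict positivity of $s^p$ on any interval bounded away from zero.
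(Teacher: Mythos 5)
Your argument is correct and is essentially the paper's own proof: rescale the integrand to $\xi_s=\phi_s/s^p$, use associativity to reduce to $\int_{0+}^t s^p\,\dx Y_s$ with $Y_t=\int_{0+}^t\xi_s\,\dx X_s$, integrate by parts against the continuous finite-variation function $s\mapsto s^p$, and conclude via $\sup_{0<s\leq t}|Y_s|\to0$ a.s. The only detail the paper spells out that you gloss over is the definition of the integrand at $s=0$ (as the a.s.\ limit, set to $0$ on the exceptional null set) and its $\cF_0$-measurability under the usual hypotheses, which is exactly the ``admissibility'' point you flag.
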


\begin{proof} 
Define  the process $\psi$ ($\om$-wise) by
\begin{displaymath}
\psi_t:=\begin{cases}t^{-p}\phi_t,\ t>0,\\ \lim_{s\downarrow0} s^{-p}\phi_s,\ t=0,\end{cases}
\end{displaymath}
possibly setting $\psi_0(\om)=0$ on the null set where the limit does not exist. By definition,~$\psi$ is c\`{a}gl\`{a}d, and, as $\lim_{t\downarrow}t^{-p}\phi_t$ exists almost surely in~$\R$ and $\cF_0$ contains all null sets by assumption and the filtration is right-continuous,~$\psi_0$ is $\cF_0$-measurable. Therefore, $\psi$ is also adapted. This implies that the semimartingale
\begin{displaymath}
Y_t:=\int_{0+}^t\psi_s\dx X_s
\end{displaymath}
is indeed well-defined, allowing to rewrite the process considered in \eqref{lemma-limit} using the associativity of the stochastic integral. This leads to
\begin{displaymath}
\int_{0+}^t\phi_s\dx X_s=\int_{0+}^ts^p\psi_s\dx X_s=\int_{0+}^ts^p\dx Y_s,
\end{displaymath}
which implies
\begin{displaymath}
\frac{1}{t^p}\int_{0+}^t\phi_s\dx X_s=\frac{1}{t^p}\Bigl(t^pY_t-\int_{0+}^tY_s\dx (s^p)\Bigr)=Y_t-\frac{1}{t^p}\int_{0+}^tY_s\dx (s^p)
\end{displaymath}
by partial integration. As $Y$ is a semimartingale which has a.s. c\`{a}dl\`{a}g paths additionally satisfying $Y_0=0$ by definition, we have $\lim_{t\downarrow0}Y_t=0$ with probability one. The term remaining on the right-hand side is a path-by-path Lebesgue-Stieltjes integral. Note that, as $p>0$, the integrator is increasing, thus implying the monotonicity of the corresponding integral. This leads to
\begin{displaymath}
\inf_{0<s\leq t}Y_s\leq\frac{1}{t^p}\int_{0+}^tY_s\dx (s^p)\leq\sup_{0<s\leq t}Y_s.
\end{displaymath}
Recalling $\lim_{t\downarrow0}Y_t=0$ a.s., we can conclude that the above terms vanish with probability one as $t\downarrow0$, which yields the claim.
\end{proof}

\begin{remark}\rm 
(i) By defining the process $\psi_s=(\psi_{i,j})_{i,j}$ component-wise and considering~$Y_t$ as either $(Y_t)_{i,j}=\sum_{k=1}^d\int_{0+}^t(\psi_s)_{i,k}\dx (X_s)_{k,j}$ or $(Y_t)_{i,j}=\sum_{k=1}^d\int_{0+}^t(\psi_s)_{k,j}\dx (X_s)_{i,k}$, the lemma naturally extends to multivariate stochastic integrals with $\psi$ and $X$ being $\R^{n\times d}$-valued and $\R^{d\times m}$-valued semimartingales, respectively.\\
(ii) The function $t^p$ in the denominator may be replaced by an arbitrary continuous function $f:[0,\infty)\rightarrow\R$ that is increasing and satisfies $f(0)=0$ and $f(t)>0$ for all~$t>0$.
\end{remark}

Lemma~\ref{lem-convergence-integral} is the key tool to deriving a.s. short-time limiting results for the solution of a stochastic differential equation.

\begin{proposition}\label{prop-aslimitthroughSDE}
Let $L$ be an $\R^{d}$-valued semimartingale satisfying $L_0=0$, $v\in\R^{d}$, $p>0$, and $\sigma:\R^{d}\rightarrow\R^{n\times d}$ twice continuously differentiable and maximal of linear growth. Define $X=(X_t)_{t\geq0}$ as the solution of~\eqref{eq-SDE}. Then
\begin{displaymath}
\lim_{t\downarrow 0}\frac{L_t}{t^p}=v\ \mathrm{a.s.} \Rightarrow \lim_{t\downarrow 0}\frac{X_t-x}{t^p}=\sigma(X_0)v\ \mathrm{a.s.}
\end{displaymath}
\end{proposition}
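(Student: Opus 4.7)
The plan is to decompose the stochastic integral $X_t - x = \int_{0+}^t \sigma(X_{s-})\dx L_s$ by writing
\begin{equation*}
X_t - x = \sigma(X_0)L_t + R_t, \qquad R_t := \int_{0+}^t [\sigma(X_{s-}) - \sigma(X_0)]\dx L_s.
\end{equation*}
The first piece divided by $t^p$ tends to $\sigma(X_0)v$ by hypothesis, so the task reduces to showing $R_t/t^p \to 0$ almost surely. A direct application of Lemma~\ref{lem-convergence-integral} with integrand $\sigma(X_{s-}) - \sigma(X_0)$ would require a priori control on its $s^{-p}$-scaled limit—essentially the very conclusion we seek. To break this circularity I would apply the matrix integration-by-parts formula from the Preliminaries (using that the integrand vanishes at $s=0$) to obtain
\begin{equation*}
R_t = [\sigma(X_t) - \sigma(X_0)]L_t - \int_{0+}^t \dx\sigma(X_s)\,L_{s-} - [\sigma(X), L]_t.
\end{equation*}

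Divided by $t^p$, the first term tends to $0\cdot v = 0$ since $X$ is càdlàg with $X_0=x$ (so $\sigma(X_t) \to \sigma(X_0)$ by continuity of $\sigma$) while $L_t/t^p \to v$ by hypothesis. The second term is tailor-made for the multivariate version of Lemma~\ref{lem-convergence-integral} from Remark~(i), applied to integrand $\phi_s = L_{s-}$—càglàd, adapted, and satisfying $L_{s-}/s^p \to v$ almost surely (by an elementary left-limit argument from the hypothesis)—and integrator $\sigma(X)$, which is a semimartingale by Itô's formula using $\sigma \in C^2$; hence this term divided by $t^p$ also vanishes.

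The main obstacle is the covariation term $t^{-p}[\sigma(X), L]_t$. I would first establish the auxiliary fact $t^{-p}[L^j, L^k]_t \to 0$ almost surely for every pair of components: the scalar integration-by-parts identity $L^j_t L^k_t = \int_{0+}^t L^j_{s-}\dx L^k_s + \int_{0+}^t L^k_{s-}\dx L^j_s + [L^j, L^k]_t$ expresses the covariation as a combination of $L^j_t L^k_t/t^p = L^j_t \cdot (L^k_t/t^p) \to 0\cdot v^k = 0$ (using $L^j_t \to 0$) and two vanishing Lemma applications. Next, Itô's formula on $\sigma_{i,k}\circ X$ (where the full $C^2$ hypothesis enters) expands $\sigma_{i,k}(X)$ as a sum of stochastic integrals $\int \partial_a\sigma_{i,k}(X_{s-})\sigma_{a,l}(X_{s-})\dx L^l_s$, a continuous finite-variation Itô correction (which has zero covariation with any semimartingale), and a jump correction whose jumps are $O(\|\Delta X_s\|^2) = O((\Delta L_s)^2)$. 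The covariation $[\sigma_{i,k}(X), L^k]_t$ therefore splits into a Lebesgue--Stieltjes integral against $\dx[L^l, L^k]_s$ with locally bounded integrand, dominated after division by $t^p$ by a constant multiple of $\sum_l([L^l, L^l]_t + [L^k, L^k]_t)/t^p \to 0$, plus a jump contribution bounded by $\sup_{s\leq t}\|\Delta L_s\| \cdot [L, L]_t$; since $\sup_{s\leq t}\|L_s\| = O(t^p)$ from the hypothesis, the latter is $O(t^p\cdot[L, L]_t)$ and hence $O([L, L]_t) \to 0$ after division by $t^p$. Combining these observations yields $t^{-p}[\sigma(X), L]_t \to 0$, completing the proof.
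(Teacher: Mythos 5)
Your proposal is correct and follows essentially the same route as the paper's proof: integration by parts, Lemma~\ref{lem-convergence-integral} applied to $\int_{0+}^t \dx\sigma(X_s)L_{s-}$ with integrand $L_{s-}$, and an It$\overline{\mathrm{o}}$ expansion of $\sigma(X)$ to reduce $[\sigma(X),L]_t$ to quadratic-variation terms that are $o(t^p)$. The only deviations are cosmetic: you peel off $\sigma(X_0)L_t$ before integrating by parts, obtain $t^{-p}[L^j,L^k]_t\to0$ for all pairs via the polarization identity rather than only on the diagonal plus Kunita--Watanabe, and bound the jump covariation by a direct jump-sum estimate $\sup_{0<s\leq t}\|\Delta L_s\|\cdot[L,L]_t$ instead of the paper's $\sqrt{[J,J]_t}\sqrt{[L_k,L_k]_t}$ bound -- all of which are valid.
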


\begin{proof}
Let $\lim_{t\downarrow0}t^{-p}L_t=v$ with probability one. By definition, $X$ satisfies the equation
\begin{displaymath}
X_t=x+\int_{0+}^t\sigma(X_{s-})\dx L_s.
\end{displaymath}
Applying partial integration to the individual components yields
\begin{align}
\Big(\frac{X_t-x}{t^p}\Big)_i&=\frac{1}{t^p}\sum_{k=0}^d\int_{0+}^t\sigma_{i,k}(X_{s-})\dx (L_k)_s\nonumber\\
&=\frac{1}{t^p}\sum_{k=0}^d\Big(\sigma_{i,k}(X_t)(L_k)_t-\sigma_{i,k}(x)(L_k)_0-\int_{0+}^t(L_k)_{s-}\dx\sigma_{i,k}(X_s)\nonumber\\
&\quad-\big[\sigma_{i,k}(X),L_k\big]_t\Big)\label{eq-partial-int}
\end{align}
As $t^{-p}L_t\rightarrow v$ a.s. by assumption and~$X_t\rightarrow x=X_0$ a.s. by definition of $X$, the first term on the right-hand side of~\eqref{eq-partial-int} converges almost surely to the desired limit as $t\downarrow0$. Thus, the claim follows if we can show that the remaining terms vanish when the limit is considered. Since $L_0=0$ a.s., this is true for the second term and, as $\sigma(X)$ is again a semimartingale, Lemma~\ref{lem-convergence-integral} is applicable for the third term of~\ref{eq-partial-int}, showing that it converges almost surely to zero. Since $\sigma$ is twice continuously differentiable, applying It$\overline{\mathrm{o}}$'s formula for $X$ in the quadratic covariation appearing in the last term yields
\begin{align}
\big[\sigma_{i,k}(X),L_k\big]_t&=\Big[\sigma_{i,k}(x)+\sum_{j=1}^n\int_{0+}^{\cdot}\frac{\partial\sigma_{i,k}}{\partial x_j}(X_{s-})\dx (X_j)_s\nonumber\\
&\quad+\frac{1}{2}\sum_{j_1,j_2=1}^n\int_{0+}^{\cdot}\frac{\partial^2\sigma_{i,k}}{\partial x_{j_1}\partial x_{j_2}}(X_{s-})\dx \big[X_{j_1},X_{j_2}\big]^c_s\nonumber\\
&\quad+ \sum_{0<s\leq \cdot} \Big(\sigma_{i,k}(X_s)-\sigma_{i,k}(X_{s-})-\sum_{j=1}^n\frac{\partial\sigma_{i,k}}{\partial x_j}(X_{s-})\Delta(X_j)_s\Big),L_k\Big]_t.\label{eq-appl-ito}
\end{align}
By linearity of the quadratic covariation, the right-hand side of~\eqref{eq-appl-ito} is split into seperate terms that can be treated individually. Further, using the associativity of the stochastic integral and the fact that continuous finite variation terms do not contribute to the quadratic covariation, it follows that many of the terms vanish, leaving
\begin{align}
\big[\sigma_{i,k}(X),L\big]_t&=\sum_{j=1}^n\int_{0+}^t\frac{\partial\sigma_{i,k}}{\partial x_j}(X_{s-})\dx [X_j,L_k]_s\label{eq-appl-ito2}\\
&\quad+\Big[\sum_{0<s\leq \cdot} \Big(\sigma_{i,k}(X_s)-\sigma_{i,k}(X_{s-})-\sum_{j=1}^n\frac{\partial\sigma_{i,k}}{\partial x_j}(X_{s-})\Delta(X_j)_s\Big),L_k\Big]_t.\nonumber
\end{align}
For the first term observe that the quadratic variation process is of finite variation, such that the integral is given by a path-by-path Lebesgue-Stieltjes integral. By the definition of $X$, it follows that
\begin{displaymath}
[X_j,L_k]_t=\sum_{l=1}^d\int_{0+}^t\sigma_{j,l}(X_{s-})\dx [L_l,L_k]_s.
\end{displaymath}
Denoting integration with respect to the total variation measure of a process $Y$ as $\dx TV_Y$, the individual integrals can be estimated by
\begin{align*}
&\Big|\int_{0+}^t\sigma_{j,l}(X_{s-})\dx [L_{l},L_{k}]_s\Big|\leq\int_{0+}^t\big|\sigma_{j,l}(X_{s-})|\dx TV_{[L_l,L_k]}(s)\\
&\leq\Bigl(\int_{0+}^t\big|\sigma_{j,l}(X_{s-})\big|\dx [L_l,L_l]_s\Bigr)^{\frac{1}{2}}\Bigl(\int_{0+}^t\big|\sigma_{j,l}(X_{s-})\big|\dx [L_k,L_k]_s\Bigr)^{\frac{1}{2}}\\
&\leq \sup_{0<s\leq t}\big|\sigma_{j,l}(X_{s-})\big|\sqrt{[L_l,L_l]_t}\sqrt{[L_k,L_k]_t},
\end{align*}
using the Kunita-Watanabe inequality (see e.g. \cite[Th.~II.25]{Protter2005}) and the fact that the resulting integrals have increasing integrators. Further, the above estimates also show that the total variation of $\int_{0+}^t\sigma_{j,l}(X_{s-})\dx[L_l,L_k]$ satisfies this estimate. For the quadratic variation terms note that since $(L_0)_{k,l}=0$ a.s. and
\begin{displaymath}
[L_k,L_k]_t=(L_k)_t^2-2\int_{0+}^t(L_k)_{s-}\dx (L_k)_s,
\end{displaymath}
it follows from the assumption and the one-dimensional version of Lemma~\ref{lem-convergence-integral} that
\begin{displaymath}
\lim_{t\downarrow0}\frac{1}{t^p}[L_k,L_k]_t=\lim_{t\downarrow0}\frac{1}{t^p}\sqrt{[L_l,L_l]_t}\sqrt{[L_k,L_k]_t}=0
\end{displaymath}
with probability one. Thus, 
\begin{align*}
0&\leq\lim_{t\downarrow0}\sup\Big|\frac{1}{t^p}\int_{0+}^t\sigma_{j,l}(X_{s-})\dx [L_l,L_k]_s\Big|=0\ \mathrm{a.s.},
\end{align*}
and a similar estimate holds true for the total variation of $\int_{0+}^t\sigma_{j,l}(X_{s-})\dx [L_l,L_k]_s$. Denoting the total variation process of $Y$ at $t$ by $TV(Y)_t$, we obtain the bound
\begin{displaymath}
\frac{1}{t^p}\Big|\sum_{j=1}^n\int_{0+}^t\frac{\partial\sigma_{i,k}}{\partial x_j}(X_{s-})\dx [X_j,L_k]_s\Big|\leq\sum_{j=1}^n\sup_{0<s\leq t}\Big|\frac{\partial\sigma_{i,k}}{\partial x_j}(X_{s-})\Big|\frac{1}{t^p}TV\Big([X_j,L_k]\Big)_t
\end{displaymath}
for the first term on the right-hand side of~\eqref{eq-appl-ito2}, showing that it vanishes almost surely when the limit $t\downarrow0$ is considered. Lastly, denote
\begin{displaymath}
\Big[\sum_{0<s\leq t} \Big(\sigma_{i,k}(X_s)-\sigma_{i,k}(X_{s-})-\sum_{j=1}^n\frac{\partial\sigma_{i,k}}{\partial x_j}(X_{s-})\Delta(X_j)_s\Big),L_k\Big]_t=:[J,L_k]_t
\end{displaymath}
for the jump term remaining in~\eqref{eq-appl-ito2}. Using the Kunita-Watanabe inequality and recalling that $[L_k,L_k]=o(t^p)$ by the previous estimate, it remains to consider the quadratic variation of the process $J$. Evaluating
\begin{align*}
[J,J]_t&=\sum_{0<s\leq t}(\Delta J_s)^2=\sum_{0<s\leq t} \Big(\sigma_{i,k}(X_s)-\sigma_{i,k}(X_{s-})-\sum_{j=1}^n\frac{\partial\sigma_{i,k}}{\partial x_j}(X_{s-})\Delta(X_j)_s\Big)^2,
\end{align*}
and noting that
\begin{displaymath}
\sup_{0<s\leq t}\Big|\frac{\partial^2\sigma_{i,k}}{\partial x_{j_1}\partial x_{j_2}}(X_{s-})\Big|<\infty
\end{displaymath}
for all $j_1,j_2=1,\dots,n$ and a fixed $t\geq0$ as $\sigma\in C^2$ and $X$ is a càdlàg process, we can conclude that
\begin{align*}
[J,J]_t\leq C \sum_{0<s\leq t}\|\Delta X_s\|^2=C \sum_{0<s\leq t}\|\sigma(X_{s-})\Delta L_s\|^2\leq C'\sum_{0<s\leq t}\|\Delta L_s\|^2\leq C'\sum_{k=1}^d[L_k,L_k]_t
\end{align*}
for some finite (random) constants $C,C'$. This shows that both terms in~\eqref{eq-appl-ito2} are indeed $o(t^p)$ and do not contribute when the limit $t\downarrow0$ in~\eqref{eq-partial-int} is considered. Hence, the limit is equal to~$\sigma(X_0)v$ almost surely, which is the claim.
\end{proof}

\begin{remark}\rm
(i) Observe that $\lim_{t\downarrow0}t^{-p}L_t=v$ implies $[L,L]=o(t^p)$ here. Whenever $L$ is a Lévy process, the same assumption yields $[L,L]_t=o(t^{2p})$ (see Lemma~\ref{lem-quadraticvariation} below).\\
(ii) Similar to Lemma~\ref{lem-convergence-integral}, one can replace $t^p$ by any other continuous, increasing function $f:[0,\infty)\rightarrow\R$ that satisfies $f(0)=0$ and $f(t)>0$ for all $t>0$.\\
(ii) Since the short-time behavior of the process is determined by its behavior in an arbitrarily small neighborhood of zero, Proposition~\ref{prop-aslimitthroughSDE} and many of the results below are also applicable when the solution of the SDE is only well-defined on some interval $[0,\varepsilon]$ with $\varepsilon>0$. Thus, the linear growth condition can be omitted if one replaces $t$ by $\min\{t,\varepsilon\}$ in the calculations.
\end{remark}

Whenever one can assure that $\sigma(X_{s-})$ is invertible, the implication in Proposition~\ref{prop-aslimitthroughSDE} is indeed an equivalence. This yields the following counterpart to~\cite[Thm.~4.4]{Schnurr2013} for almost sure limits at zero.
\begin{proposition}\label{prop-equivalence-exponential}
Let $L$ be an $\R^d$-valued semimartingale, $v\in\R^d$ and $p>0$ and $X$ the solution to~\eqref{eq-SDE}. Let further $\sigma:\R^d\rightarrow\R^{d\times d}$ be twice continuously differentiable, maximal of linear growth and such that $\sigma(X_{t-})$ has almost surely full rank for $t\geq0$, where we set $X_{0-}=x$. Then
\begin{equation}\label{theorem-equivalence} 
\lim_{t\downarrow0}\frac{L_t}{t^p}=v\ \mathrm{a.s.}\Leftrightarrow \lim_{t\downarrow0}\frac{X_t-x}{t^p}=\sigma(x)v\ \mathrm{a.s.}
\end{equation}
\end{proposition}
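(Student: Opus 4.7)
The implication $\Rightarrow$ is a direct instance of Proposition~\ref{prop-aslimitthroughSDE} (with $n=d$), so only the converse requires work. The strategy is to invert the SDE and then run the proof of Proposition~\ref{prop-aslimitthroughSDE} in reverse, with the roles of $L$ and $X-x$ swapped and $\sigma$ replaced by $\tau:=\sigma^{-1}$.

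First I would set up the inversion on a small random time interval. Setting $t=0$ in the full rank hypothesis (using $X_{0-}=x$) shows that $\sigma(x)$ is invertible, and continuity of $\det\sigma$ yields an open ball $U\ni x$ on which $\sigma$ is invertible with $\tau=\sigma^{-1}\in C^2(U)$. Since $X$ is càdlàg with $X_0=x$, the stopping time $T:=\inf\{t\geq0:X_t\notin U\}$ is a.s.\ strictly positive. A smooth cutoff extends $\tau$ to a $C^2$ function on all of $\R^d$ with bounded derivatives, and by the short-time nature of the claim (cf.\ the remark following Proposition~\ref{prop-aslimitthroughSDE}) we may work with this extension, for which $L_t=\int_{0+}^t\tau(X_{s-})\,\dx X_s$ on $[0,T]$ by invertibility along the trajectory.

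Next I would apply matrix partial integration to the representation $L_t=\int_{0+}^t\tau(X_{s-})\,\dx(X_s-x)$, exploiting $X_0-x=0$, to obtain
\[
L_t \;=\; \tau(X_t)(X_t-x)\;-\;\int_{0+}^t \dx\tau(X_s)\,(X_{s-}-x)\;-\;[\tau(X),X]_t.
\]
Dividing by $t^p$ and sending $t\downarrow 0$, the first summand tends a.s.\ to $\tau(x)\sigma(x)v=v$ by continuity of $\tau$ at $x$ and the hypothesis. For the middle summand, the càglàd process $\psi_s=X_{s-}-x$ satisfies $s^{-p}\psi_s\to\sigma(x)v$ a.s., so the matrix version of Lemma~\ref{lem-convergence-integral} applied to the semimartingale $\tau(X)$ makes this term vanish. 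For the bracket term, I would expand $\tau(X)$ by Itô's formula exactly as in the proof of Proposition~\ref{prop-aslimitthroughSDE}, reducing $[\tau(X),X]_t$ to quantities controlled by Kunita--Watanabe together with the basic estimate $[X_k,X_k]_t=o(t^p)$. This last estimate follows from
\[
[X_k,X_k]_t \;=\; (X_k(t)-x_k)^2 - 2\int_{0+}^t (X_k(s-)-x_k)\,\dx X_k(s),
\]
since the first summand is $O(t^{2p})=o(t^p)$ for $p>0$ and the integral is $o(t^p)$ by Lemma~\ref{lem-convergence-integral} applied with $\phi_s=X_k(s-)-x_k$.

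The main obstacle is not any single estimate but the fact that full rank of $\sigma$ is only postulated along the trajectory of $X$, whereas Itô's formula and partial integration want $\tau$ to be globally $C^2$. Localizing via the stopping time $T>0$ and a smooth cutoff circumvents this without loss, as the limit is determined by behavior in an arbitrarily small right-neighborhood of $0$; after this reduction the remaining calculations are exactly those of Proposition~\ref{prop-aslimitthroughSDE} with the obvious swaps.
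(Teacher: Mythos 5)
Your proposal is correct, but the converse is argued along a genuinely different route than the paper's. You invert the SDE locally, extend $\tau=\sigma^{-1}$ to a global $C^2$ function via a cutoff and a stopping time, and then rerun the whole machinery of Proposition~\ref{prop-aslimitthroughSDE} with the roles of $L$ and $X-x$ swapped: integration by parts, an It$\overline{\mathrm{o}}$ expansion of $\tau(X)$, Kunita--Watanabe, and the estimate $[X_k,X_k]_t=o(t^p)$, which you correctly derive from the hypothesis exactly as the paper derives $[L_k,L_k]_t=o(t^p)$ (note only that you need $t^{-p}(X_{t-}-x)\rightarrow\sigma(x)v$ for the left limits, which follows since left limits along $t\downarrow0$ inherit the same limit). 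The paper instead keeps the global representation $L_t=\int_{0+}^t(\sigma(X_{s-}))^{-1}\dx X_s$, which is available without any localization because full rank is assumed along the whole trajectory, multiplies by $\sigma(x)$ and writes $\sigma(x)\sigma(X_t)^{-1}=\Id+(\Id-\sigma(X_t)\sigma(x)^{-1})+R_t$ via a Neumann series; a first-order Taylor expansion of $\sigma$ at $x$ together with the hypothesis shows that $t^{-p}\bigl(\sigma(x)\sigma(X_{t})^{-1}-\Id\bigr)$ converges a.s., so a single application of (the matrix version of) Lemma~\ref{lem-convergence-integral} makes the integral term vanish and $\sigma(x)t^{-p}L_t\rightarrow\sigma(x)v$ follows directly. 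The trade-off: your approach is self-contained in the sense that it literally mirrors Proposition~\ref{prop-aslimitthroughSDE}, but it pays for this with the localization/extension of $\sigma^{-1}$ (needed to apply It$\overline{\mathrm{o}}$'s formula) and a second pass through the covariation estimates, whereas the paper's Neumann-series argument avoids any further It$\overline{\mathrm{o}}$ expansion and any smooth extension, reducing the converse to Lemma~\ref{lem-convergence-integral} alone; both proofs use Proposition~\ref{prop-aslimitthroughSDE} only for the forward implication.
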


\begin{proof}
As Proposition~\ref{prop-aslimitthroughSDE} yields the first implication, let $\lim_{t\downarrow0}t^{-p}(X_t-x)=\sigma(X_0)v$ with probability one. Using that $\sigma(X_{s-})$ has almost surely full rank, we can recover $L$ from~$X$~via
\begin{displaymath}
L_t=\int_{0+}^t\bigl(\sigma(X_{s-})\bigr)^{-1}\dx X_s.
\end{displaymath}
Since $\lim_{t\downarrow0}X_t=x$ a.s., it is $\|\sigma(X_t)-\sigma(x)\|<1$ a.s. for sufficiently small $t>0$. This implies
\begin{align*}
\sigma(x)\sigma(X_t)^{-1}&=\bigl(\Id-(\Id-\sigma(X_t)\sigma(x)^{-1})\bigr)^{-1}=\sum_{k=0}^{\infty}\bigl(\Id-\sigma(X_t)\sigma(x)^{-1}\bigr)^k\\
&=\Id+\bigl(\Id-\sigma(X_t)\sigma(x)^{-1}\bigr)+R_t,
\end{align*}
where the Neumann series converges almost surely in norm. Observe that we have by Taylor's formula
\begin{displaymath}
\frac{1}{t^p}(\sigma(X_t)-\sigma(x))_{i,j}=\frac{1}{t^p}\sum_{k=1}^{n}\frac{\partial \sigma_{i,j}}{\partial x_k}(x)(X_t-x)_{i,j}+\frac{1}{t^p}r_{i,j}(t)
\end{displaymath}
where the remainder term satisfies $r_{i,j}(t)=O((X_t-x)^2)=o(t^p)$. Thus,
\begin{align*}
\lim_{t\downarrow0}\frac{1}{t^p}(\Id-\sigma(X_t)\sigma(x)^{-1}\bigr)&=\lim_{t\downarrow0}\frac{1}{t^p}(\sigma(x)-\sigma(X_t)\bigr)\sigma(x)^{-1}
\end{align*}
exists almost surely from which it follows that also $R_t=o(t^p)$ with probability one. Hence,
\begin{align*}
\sigma(x)\frac{1}{t^p}L_t&=\frac{1}{t^p}\int_{0+}^t\bigl(\sigma(x)(\sigma(X_{s-}))^{-1}-\Id\bigr)\dx X_s+\frac{1}{t^p}\int_{0+}^t\Id\ \dx X_s\\
&=\frac{1}{t^p}\int_{0+}^t\bigl(\sigma(x)(\sigma(X_{s-}))^{-1}-\Id\bigr)\dx X_s+t^{-p}\bigl(X_t-x\bigr).
\end{align*}
and we find that the limit for $t\downarrow0$ exists almost surely and is equal to $\sigma(X_0)v$ by Lemma~\ref{lem-convergence-integral} and the assumption. This yields the claim since $\sigma(X_0)$ has full rank with probability~one.
\end{proof}

Proposition~\ref{prop-equivalence-exponential} is in particular applicable for the stochastic exponential by vectorization of the matrix-valued stochastic processes. Here, the condition $\det(\Id+\Delta L_s)\neq0$ for all~$s\geq0$ ensures that the inverse $\cE(L)^{-1}$ is well defined (see~\cite{Karandikar1991}).
\begin{corollary}\label{cor-equivalence-exponential}
Let $L$ be an $\R^{d\times d}$-valued semimartingale satisfying $\det(\Id+\Delta L_s)\neq0$ for all~$s\geq0$, $v\in\R^{d\times d}$ and $p>0$. Then
\begin{equation}\label{theorem-equivalence} 
\lim_{t\downarrow0}\frac{L_t}{t^p}=v\ \mathrm{a.s.}\Leftrightarrow \lim_{t\downarrow0}\frac{\cE(L)-\Id}{t^p}=v\ \mathrm{a.s.}
\end{equation}
\end{corollary}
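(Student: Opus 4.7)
The plan is to reduce the corollary to Proposition~\ref{prop-equivalence-exponential} by vectorizing the matrix-valued defining SDE $\dx X_t=X_{t-}\dx L_t$, $X_0=\Id$ of the stochastic exponential. Applying the Kronecker identity $(AXB)^{vec}=(B^T\otimes A)X^{vec}$ (with $B=\Id$) componentwise to $(X_{t-}\dx L_t)^{vec}$, the matrix SDE becomes the $\R^{d^2}$-valued SDE
\begin{equation*}
\dx X_t^{vec}=\tilde\sigma(X_{t-}^{vec})\,\dx L_t^{vec},\qquad X_0^{vec}=\Id^{vec},
\end{equation*}
where $\tilde\sigma:\R^{d^2}\to\R^{d^2\times d^2}$ is given by $\tilde\sigma(y):=\Id_d\otimes\mathrm{mat}(y)$ and $\mathrm{mat}$ denotes the inverse of column-wise vectorization. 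Since $\tilde\sigma$ is linear in $y$, it is automatically twice continuously differentiable and of linear growth, so the first two hypotheses of Proposition~\ref{prop-equivalence-exponential} (with $d^2$ in place of $d$) hold.

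For the rank condition I would use $\det(\Id_d\otimes\mathrm{mat}(y))=\det(\mathrm{mat}(y))^d$, so $\tilde\sigma(X_{t-}^{vec})$ is of full rank if and only if $X_{t-}=\cE(L)_{t-}$ is invertible. By the cited result of~\cite{Karandikar1991}, the assumption $\det(\Id+\Delta L_s)\neq0$ for all $s\geq0$ guarantees that $\cE(L)$ admits an inverse as a semimartingale; in particular $\cE(L)_t$ and its left limits are almost surely invertible (where $\cE(L)_{0-}=\Id$ by the convention of the proposition).

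With all hypotheses verified, Proposition~\ref{prop-equivalence-exponential} applied in dimension $d^2$ with initial value $\Id^{vec}$ yields
\begin{equation*}
\lim_{t\downarrow0}\frac{L_t^{vec}}{t^p}=v^{vec}\ \mathrm{a.s.}\quad\Longleftrightarrow\quad\lim_{t\downarrow0}\frac{X_t^{vec}-\Id^{vec}}{t^p}=\tilde\sigma(\Id^{vec})v^{vec}\ \mathrm{a.s.}
\end{equation*}
Since $\tilde\sigma(\Id^{vec})=\Id_d\otimes\Id_d=\Id_{d^2}$, the right-hand side simplifies to $v^{vec}$. Vectorization being a linear bijection between $\R^{d\times d}$ and $\R^{d^2}$, and all norms on these finite-dimensional spaces being equivalent, almost sure convergence of the vectorized quantities is equivalent to almost sure convergence of the corresponding matrix-valued quantities, yielding the claim.

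The only non-routine point is the bookkeeping in the vectorization step: one has to arrange that right-multiplication by $\dx L_t$ at the matrix level corresponds to left-multiplication by the block operator $\Id_d\otimes X_{t-}$ at the vector level, and then match the full-rank condition on this operator with the given determinant condition on $\Id+\Delta L_s$ via Karandikar's result. Once that identification is in place, the corollary is an immediate specialization of Proposition~\ref{prop-equivalence-exponential}.
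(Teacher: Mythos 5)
Your proof is correct and follows exactly the route the paper indicates: the paper proves the corollary only by the remark that Proposition~\ref{prop-equivalence-exponential} applies ``by vectorization'' with $\det(\Id+\Delta L_s)\neq0$ guaranteeing invertibility of $\cE(L)$ via Karandikar's result, and your write-up simply fills in that bookkeeping (the identification $\tilde\sigma(y)=\Id_d\otimes\mathrm{mat}(y)$, its linearity, $\det(\Id_d\otimes M)=\det(M)^d$, and $\tilde\sigma(\Id^{vec})=\Id_{d^2}$) correctly.
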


\begin{remark}\rm 
In the case that $L$ is a L\'evy process, the a.s. limit $v$ appearing for $p=1$ in Corollary~\ref{cor-equivalence-exponential} is the drift of $L$. A result by Shtatland and Rogozin (see \cite{Shtatland1965} and~\cite{Rogozin1968}) directly links the existence of this limit to the process having sample paths of bounded variation. Since the stochastic exponential $\cE(L)$ has paths of bounded variation iff this holds true for the paths of $L$, a similar connection can be made for $\cE(L)$. Denote by $BV$ the set of stochastic processes having sample paths of bounded variation, then Corollary~\ref{cor-equivalence-exponential} implies
\begin{displaymath}
\lim_{t\downarrow0}\frac{\cE(L)-\Id}{t}\ \text{exists}\ \mathrm{a.s.} \Leftrightarrow \lim_{t\downarrow0}\frac{L_t}{t}\ \text{exists}\ \mathrm{a.s.}\Leftrightarrow L\in BV\Leftrightarrow \cE(L)\in BV.
\end{displaymath}
\end{remark}

Considering Proposition~\ref{prop-aslimitthroughSDE} in the context of L\'evy processes yields the following result.
\begin{proposition}\label{prop-levydriven}
Let $L$ be an $\R^{d\times m}$-valued L\'evy process satisfying $\lim_{t\downarrow0}t^{-p}L_t=v$ a.s. for some $v\in\R^{d\times m}$, $p>0$  and let $X=(X_t)_{t\geq0}$ be an $\R^{n\times d}$-valued semimartingale. Then
\begin{displaymath}
\frac{1}{t^p}\int_{0+}^tX_{s-}\dx L_s\rightarrow X_0v\ \mathrm{a.s.},\ t\downarrow0.
\end{displaymath}
If additionally $\lim_{t\downarrow0}t^{-p}X_t=w$ for some matrix $w\in\R^{n\times d}$ such that $wv=0$, then
\begin{displaymath}
\frac{1}{t^{2p}}\int_{0+}^tX_{s-}\dx L_s\rightarrow 0\ \mathrm{a.s.},\ t\downarrow0.
\end{displaymath}
\end{proposition}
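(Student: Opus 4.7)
My plan is to apply matrix integration by parts to $\int_{0+}^t X_{s-}\,\dx L_s$, which together with $L_0=0$ gives
\begin{displaymath}
\int_{0+}^t X_{s-}\,\dx L_s = X_t L_t - \int_{0+}^t \dx X_s\, L_{s-} - [X,L]_t,
\end{displaymath}
and to estimate each term using Lemma~\ref{lem-convergence-integral}, the Kunita--Watanabe inequality, and the L\'evy-specific quadratic-variation bound $[L,L]_t=o(t^{2p})$ announced in the remark after Proposition~\ref{prop-aslimitthroughSDE}.

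For the first claim, dividing the IBP identity by $t^p$: the product $X_t L_t/t^p\to X_0 v$ by right-continuity of $X$ and the hypothesis on $L$; the stochastic integral $t^{-p}\int \dx X_s\, L_{s-}$ vanishes by the multivariate version of Lemma~\ref{lem-convergence-integral} applied entrywise with integrand $L_{s-}$, after observing $L_{s-}/s^p\to v$ (which follows from $L_s/s^p\to v$ together with $|\Delta L_s|/s^p\to 0$); and $[X,L]_t/t^p\to 0$ by Kunita--Watanabe, since $\sqrt{[L,L]_t}=o(t^p)$ and $[X,X]_t\to 0$ for every semimartingale $X$.

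For the second claim, $X_t/t^p\to w$ with $p>0$ forces $X_0=0$, whence $X_tL_t/t^{2p}=(X_t/t^p)(L_t/t^p)\to wv=0$. To treat the remaining terms I decompose $L_s=vs^p+Q_s$ with $Q_s/s^p\to 0$. Substituting into $\int X_{s-}\,\dx L_s$ and applying IBP to the resulting deterministic piece $\int X_{s-}\,\dx(vs^p)$ (noting $[X,vs^p]=0$), one obtains after simplification
\begin{displaymath}
\int_{0+}^t X_{s-}\,\dx L_s = p\Bigl(\int_{0+}^t X_s\, s^{p-1}\,\dx s\Bigr)v + \int_{0+}^t X_{s-}\,\dx Q_s.
\end{displaymath}
Using $X_s/s^p\to w$, the first summand divided by $t^{2p}$ tends to $(w/2)v=0$ thanks to $wv=0$.

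The main obstacle is then to show $\int_{0+}^t X_{s-}\,\dx Q_s=o(t^{2p})$, because a direct application of Lemma~\ref{lem-convergence-integral} only yields $o(t^p)$, one power short. My plan is to decompose once more $X_s=ws^p+Y_s$ with $Y_s/s^p\to 0$: the piece $w\int s^p\,\dx Q_s$ is $o(t^{2p})$ by IBP together with $Q_s/s^p\to 0$, while for the genuinely delicate residual $\int Y_{s-}\,\dx Q_s$ both integrand and integrator are of order smaller than $s^p$. For this term I would IBP once more and bound the surviving quadratic-covariation contribution $[Y,Q]_t=[Y,L]_t$ via Kunita--Watanabe against $\sqrt{[L,L]_t}=o(t^p)$, with the two improvements $Y_s=o(s^p)$ and $Q_s=o(s^p)$ compensating the missing factor of $t^p$ in the other IBP pieces; closing this final $t^p$ gap rigorously is where the full strength of the L\'evy hypothesis on $L$ enters.
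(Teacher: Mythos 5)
Your treatment of the first limit is essentially the paper's own argument (integration by parts, Lemma~\ref{lem-convergence-integral} applied to $\int_{0+}^t\dx X_s\,L_{s-}$, Kunita--Watanabe combined with $[L,L]_t=o(t^{2p})$ from Lemma~\ref{lem-quadraticvariation}), and it is correct; the only slip is that $[X,X]_t$ tends to $[X,X]_0$ rather than to $0$, but boundedness near zero is all that is needed there. The second limit, however, is not proved. After the additive decompositions $L_s=vs^p+Q_s$ and $X_s=ws^p+Y_s$ you are left with $\int_{0+}^tY_{s-}\dx Q_s$, and the tools you invoke cannot close the gap you yourself flag. Integrating by parts merely trades this term for $\int_{0+}^t\dx Y_s\,Q_{s-}$, which is of exactly the same nature: Lemma~\ref{lem-convergence-integral} with integrand $Q_{s-}=o(s^p)$ still only gives $o(t^p)$, because its proof gains nothing from the normalized integrand having limit $0$ rather than a finite constant (the rate at which the auxiliary integral in that proof tends to $0$ is not quantified), and smallness of the \emph{integrator} $Y$ is not something a stochastic integral can be made to exploit directly. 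Moreover, the Kunita--Watanabe step for $[Y,Q]_t=[Y,L]_t$ would need $[Y,Y]_t=[X,X]_t=O(t^{2p})$, which does not follow from $t^{-p}X_t\to w$ for a general semimartingale: for $p\in(1/2,1)$, the deterministic finite-variation pure-jump process that jumps up by $n^{-1}(\ln n)^{-2}$ at time $1/n$ and immediately back down satisfies $X_t=o(t^p)$, yet $[X,X]_{1/n}$ is of order $n^{-1}(\ln n)^{-4}\gg n^{-2p}$. So Kunita--Watanabe again yields only $o(t^p)$, one power short, and the ``full strength of the L\'evy hypothesis'' cannot rescue this, since the obstruction sits in $X$, about which nothing beyond the semimartingale property is assumed.

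The missing idea is multiplicative rather than additive, and it is how the paper argues: set $\psi_s:=s^{-p}X_s$ for $s>0$ and $\psi_0:=w$, so that $X_{s-}=s^p\psi_{s-}$ exactly, and use associativity to write $\int_{0+}^tX_{s-}\dx L_s=\int_{0+}^ts^p\dx Y_s$ with $Y_t:=\int_{0+}^t\psi_{s-}\dx L_s$. The already established first limit, applied with $\psi$ in place of $X$, gives $t^{-p}Y_t\to\psi_0v=wv=0$ a.s., and integration by parts against the deterministic increasing function $s^p$ (a pathwise Stieltjes integral, hence no covariation term) yields
\begin{displaymath}
\frac{1}{t^{2p}}\int_{0+}^tX_{s-}\dx L_s=\frac{1}{t^p}Y_t-\frac{1}{t^{2p}}\int_{0+}^tY_s\dx (s^p),
\end{displaymath}
where the last term is bounded in norm by $t^{-p}\sup_{0<s\leq t}\|Y_s\|\to0$. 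This is where the second power of $t^p$ and the hypothesis $wv=0$ enter; no control on $[X,X]$ is required at all. I recommend replacing your final step by this substitution argument.
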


Note that the above proposition holds in particular when $X_t=\sigma(L_t)$ for a suitable function $\sigma$, but the dependence on the driving process is not needed to conclude the convergence. This is due to the following property of the quadratic variation of a Lévy process.

\begin{lemma}\label{lem-quadraticvariation}
Let $L$ be given as in Proposition~\ref{prop-levydriven}, then, almost surely, $[L,L]_t=o(t^{2p})$ as~$t\downarrow0$.
\end{lemma}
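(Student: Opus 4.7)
The plan is to reduce the matrix-valued claim to scalar quadratic variations via the Kunita--Watanabe inequality
\[
|[L_{ij},L_{kl}]_t|\leq [L_{ij},L_{ij}]_t^{1/2}\,[L_{kl},L_{kl}]_t^{1/2},
\]
so that it suffices to show $[\tilde L,\tilde L]_t=o(t^{2p})$ almost surely for every scalar component $\tilde L=L_{ij}$, which is itself a L\'evy process whose entrywise limit $\tilde L_t/t^p\to v_{ij}$ is inherited from the hypothesis. From here on I work with a fixed scalar L\'evy process $L$ of characteristic triplet $(A,\nu,\gamma)$ and rely on the L\'evy--It\^o identity
\[
[L,L]_t=At+\sum_{0<s\leq t}(\Delta L_s)^2.
\]

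First I would handle the Gaussian contribution. If $A>0$, the law of the iterated logarithm applied to the embedded Brownian component $\sqrt{A}\,B$ rules out an almost sure limit of $L_t/t^p$ unless $p<1/2$ (the Brownian part is independent of the jump-plus-drift part, so its LIL-oscillations cannot be absorbed into a deterministic limit). In that regime $At/t^{2p}=A\,t^{1-2p}\to 0$; otherwise $A=0$ outright, and in all cases the Gaussian term is $o(t^{2p})$.

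For the jump sum, I would first distil the hypothesis into the pathwise estimate $|\Delta L_s|\leq 2(|v|+\epsilon)s^p$ for all $s\leq\delta(\om,\epsilon)$, via the triangle inequality from $|L_s|,|L_{s-}|\leq(|v|+\epsilon)s^p$. Picking $\alpha>\beta_{BG}(L)$ with $\beta_{BG}$ the Blumenthal--Getoor upper index makes $\int_{|x|\leq 1}|x|^\alpha\nu(\dx x)<\infty$, so $t\mapsto\sum_{0<s\leq t,\,|\Delta L_s|\leq 1}|\Delta L_s|^\alpha$ is a pure-jump subordinator of zero drift, and the Shtatland--Rogozin result cited in the introduction yields $\sum|\Delta L_s|^\alpha=o(t)$ almost surely. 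For $t$ small enough that every jump on $[0,t]$ lies in $[-1,1]$, combining the two estimates gives
\[
\sum_{0<s\leq t}(\Delta L_s)^2\leq\Bigl(\sup_{s\leq t}|\Delta L_s|\Bigr)^{2-\alpha}\sum_{0<s\leq t}|\Delta L_s|^\alpha\leq\bigl(2(|v|+\epsilon)t^p\bigr)^{2-\alpha}\,o(t)=o\bigl(t^{1+p(2-\alpha)}\bigr),
\]
which is $o(t^{2p})$ as soon as $\alpha\leq 1/p$.

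The principal obstacle is showing that such an admissible exponent $\alpha\in(\beta_{BG}(L),1/p]$ actually exists, that is, that the hypothesis forces the strict inequality $\beta_{BG}(L)<1/p$. I would extract this from the Bertoin--Doney--Maller characterisation of the almost sure short-time rate $L_t/t^p\to v$ for L\'evy processes cited in the introduction, which excludes $p\geq 1/\beta_{BG}(L)$ except in trivial situations such as $\beta_{BG}(L)=0$ (e.g.\ compound Poisson, where $[L,L]_t\equiv 0$ for $t$ below the first jump time and the claim is immediate). Any borderline case left over should be handled by a Markov plus Borel--Cantelli argument on a dyadic sequence $t_n=2^{-n}$, controlling the expectation $t\int_{|x|\leq 2(|v|+\epsilon)t^p}x^2\,\nu(\dx x)$ of the truncated squared-jump sum directly.
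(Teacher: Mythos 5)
Your reduction to scalar components via Kunita--Watanabe, your treatment of the Gaussian part (the paper does the same thing by citing Khintchine's LIL for the full process, which is the cleaner way to make your independence heuristic rigorous), and the regime $p\le 1/2$ (where $\alpha=2$ works and everything follows from Shtatland) are all fine, and the interpolation bound $\sum_{0<s\le t}(\Delta L_s)^2\le(\sup_{0<s\le t}|\Delta L_s|)^{2-\alpha}\sum_{0<s\le t}|\Delta L_s|^{\alpha}$ combined with the pathwise jump estimate $|\Delta L_s|\le 2(|v|+\epsilon)s^p$ is a sound idea. The genuine gap sits exactly at the step you yourself call the principal obstacle, and your proposed resolution is wrong: the Bertoin--Doney--Maller characterization does \emph{not} exclude $p\ge 1/\beta_{BG}(L)$. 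What their Theorem 2.1 gives under the hypothesis (after subtracting the drift and discarding the excluded case of finite variation, nonzero drift and $p>1$) is the integral condition $\int_{[-1,1]}|x|^{1/p}\nu_L(\dx x)<\infty$, which only yields $\beta_{BG}(L)\le 1/p$. The borderline case $\beta_{BG}(L)=1/p$ is fully compatible with $\lim_{t\downarrow0}t^{-p}L_t=0$ a.s.\ (take a L\'evy density proportional to $|x|^{-1-1/p}(\log(1/|x|))^{-2}$ near the origin) and is by no means a trivial situation; there your interval $(\beta_{BG}(L),1/p]$ is empty and no admissible $\alpha$ exists. The fallback you sketch does not close it either: under the integral condition the first moment $t\int_{|x|\le Ct^p}x^2\,\nu_L(\dx x)$ is only $o(t^{2p})$, so Markov plus Borel--Cantelli along $t_n=2^{-n}$ produces probabilities that are $o(1)$ but not summable, and a genuinely finer argument would be needed.

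The repair is immediate and keeps your route intact: do not pass to the Blumenthal--Getoor index at all, but use the BDM integral condition directly and take $\alpha=1/p$ itself. Then $S_t:=\sum_{0<s\le t}|\Delta L_s|^{1/p}\I_{\{|\Delta L_s|\le1\}}$ is a well-defined driftless pure-jump subordinator, so $S_t=o(t)$ a.s.\ by Shtatland, and your interpolation gives $\sum_{0<s\le t}(\Delta L_s)^2\le\bigl(2(|v|+\epsilon)t^p\bigr)^{2-1/p}\,o(t)=o(t^{2p})$, noting $1/p<2$ in this regime so the exponent $2-1/p$ is nonnegative. With that fix your proof is a valid and somewhat more hands-on alternative to the paper's argument, which instead treats $[L,L]$ itself as a driftless bounded-variation L\'evy process whose L\'evy measure is the image of $\nu_L$ under $x\mapsto x^2$, concludes $\int_{[0,1]}y^{1/(2p)}\nu_{[L,L]}(\dx y)=\int_{[-1,1]}|x|^{1/p}\nu_L(\dx x)<\infty$, and then applies the sufficiency half of the same BDM theorem to $[L,L]$ to obtain $[L,L]_t=o(t^{2p})$; the paper's route avoids any pathwise jump bound, while yours avoids invoking the converse direction of BDM for the derived process.
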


\begin{proof}
Using the Kunita-Watanabe inequality to estimate the individual components in the multivariate case, we may restrict the argument to $d=1$. Here, the quadratic variation~$[L,L]_t$ of $L$ is a L\'evy process of bounded variation given by
\begin{displaymath}
[L,L]_t=\sigma^2t+\sum_{0<s\leq t}(\Delta L_s)^2,
\end{displaymath}
with the constant $\sigma$ being the variance of the Gaussian part of $L$ (if present).
In the case that $p<1/2$, applying Khintchine's LIL (see e.g.~\cite[Prop.~47.11]{Sato2013}) implies that ${\lim_{t\downarrow0}t^{-p}L_t=0}$ a.s. holds for any L\'evy process. Since we also have $2p<1$, Shtatland's result yields, regardless of the value of $\sigma^2$, that
\begin{displaymath}
\frac{1}{t^{2p}}[L,L]_t=\frac{1}{t}[L,L]_t\cdot t^{1-2p}\rightarrow 0\ \mathrm{a.s.},\ t\downarrow0.
\end{displaymath}
In the case that $p=1/2$, Khinchine's LIL yields $\limsup_{t\downarrow0} L_t/\sqrt{t}=\infty$ a.s. if the Gaussian part of $L$ is nonzero. As the limit exists and is finite by assumption, the process~$L$ must satisfy $\sigma=0$. This implies that the quadratic variation process has no drift, so ${[L,L]_t=o(t)}$ a.s. by Shtatland's result. In the case that $p>1/2$, consider $L$ with its drift (if present) subtracted from the process. This neither changes the structure of the quadratic variation nor the assumption on the a.s. convergence, but ensures that~\cite[Thm.~2.1]{BertoinDoneyMaller2008} is applicable. Note that whenever $p>1$ and $L$ is of finite variation with non-zero drift, we have $\lim_{t\downarrow0}t^{-p}|L_t|=\infty$ by~\cite{Rogozin1968}, showing that this case is excluded by the assumption. It now follows from~\cite{BertoinDoneyMaller2008} that the L\'evy measure $\nu_L$ of~$L$ satisfies
\begin{displaymath}
\int_{[-1,1]}|x|^{1/p}\nu_L(\dx x)<\infty.
\end{displaymath}
Noting that $\Delta[L,L]_t=f(\Delta L_t)$ for $f(x)=x^2$, it follows that $\nu_{[L,L]}(B)=\nu_L(f^{-1}(B))$ for all sets $B\subseteq[-1,1]$. As we can now treat $\nu_{[L,L]}$ as an image measure, it is
\begin{displaymath}
\int_{[-1,1]}|x|^{1/2p}\nu_{[L,L]}(\dx x)=\int_{[0,1]}|x|^{1/2p}\nu_{[L,L]}(\dx x)=\int_{[-1,1]}|x|^{1/p}\nu_L(\dx x)<\infty.
\end{displaymath}
Thus, the quadratic variation satisfies the same integral condition with $2p$ instead of $p$. As $[L,L]_t$ is a bounded variation L\'evy process without drift, part (i) of~\cite[Thm.~2.1]{BertoinDoneyMaller2008} yields the claim in the last case.
\end{proof}

\begin{proof}[Proof of Proposition~\ref{prop-levydriven}]
First, let $X$ be a general semimartingale. Without loss of generality, we can assume $X_0=0$ a.s., since
\begin{displaymath}
\frac{1}{t^p}\int_{0+}^tX_{s-}\dx L_s=\frac{1}{t^p}\int_{0+}^t(X_{s-}-X_0)\dx L_s+\frac{1}{t^p}X_0L_t
\end{displaymath}
and $t^{-p}X_0L_t\rightarrow X_0v$ a.s. for $t\downarrow0$ by assumption. As $X$ is a semimartingale, we have
\begin{displaymath}
\frac{1}{t^p}\int_{0+}^tX_{s-}\dx L_s=\frac{1}{t^p}X_tL_t-\frac{1}{t^p}X_0L_0-\frac{1}{t^p}\int_{0+}^t\dx X_sL_{s-}-\frac{1}{t^p}[X,L]_{0+}^t
\end{displaymath}
by partial integration. Applying $X_0=0$ for the first two summands and Lemma~\ref{lem-convergence-integral} for the integral on the right-hand side, it follows that the terms vanish with probability one as~$t\downarrow0$. For the covariation, we have
\begin{displaymath}
\Big|\Bigl(\frac{1}{t^p}[X,L]_t\Bigr)_{i,j}\Big|\leq\sum_{k=1}^d\Big|\frac{1}{t^p}[X_{i,k},L_{k,j}]_t\Big|\leq\sum_{k=1}^d\frac{1}{t^p}\sqrt{[X_{i,k},X_{i,k}]_t}\sqrt{[L_{k,j},L_{k,j}]_t}
\end{displaymath}
by the Kunita-Watanabe inequality. As each component $L_{k,j}$ of $L$ is again a L\'evy process satisfying $\lim_{t\downarrow0}t^{-p}(L_{k,j})_t=v_{k,j}$ a.s., one can conclude that $[L_{k,j},L_{k,j}]_t=o(t^{2p})$ by Lemma~\ref{lem-quadraticvariation}. Therefore, it follows that $t^{-p}[X,L]_t\rightarrow0$ a.s. for $t\downarrow0$, yielding the first part of the proposition. Assume next that additionally $\lim_{t\downarrow0}t^{-p}X_t=w$ for some $w\in\R^{n\times d}$ with $wv=0$. One can argue similar to the proof of Lemma~\ref{lem-convergence-integral} and define an adapted stochastic process $\psi$ ($\om$-wise) by
\begin{displaymath}
\psi_t:=\begin{cases}t^{-p}X_t,\ t>0,\\ \lim_{s\downarrow0} s^{-p}X_s,\ t=0,\end{cases}
\end{displaymath}
possibly setting $\psi_0(\omega)=0$ on the null set where the limit does not exist. Using the associativity of the stochastic integral, rewrite
\begin{displaymath}
\int_{0+}^tX_{s-}\dx L_s=\int_{0+}^ts^p\psi_{s-}\dx L_s=\int_{0+}^ts^p\dx Y_s,
\end{displaymath}
where $Y_t=\int_{0+}^t\psi_{s-}\dx L_s$ is a well-defined semimartingale, and use integration by parts to obtain
\begin{equation}\label{eq-part-int}
\frac{1}{t^{2p}}\int_{0+}^tX_{s-}\dx L_s=\frac{1}{t^{2p}}\Bigl(t^pY_t-\int_{0+}^tY_s\dx (s^p)\Bigr)=\frac{1}{t^p}Y_t-\frac{1}{t^{2p}}\int_{0+}^tY_s\dx (s^p).
\end{equation}
By the first part of the proposition, it is
\begin{displaymath}
\lim_{t\downarrow0}\frac{1}{t^p}Y_t=\lim_{t\downarrow0}\frac{1}{t^p}\int_{0+}^t\psi_{s-}\dx L_s=\psi_0v=wv=0
\end{displaymath}
with probability one, while the path-wise Lebesgue-Stieltjes integral can be estimated by
\begin{displaymath}
\frac{1}{t^p}\inf_{0<s\leq t}(Y_s)_{i,j}\leq\frac{1}{t^{2p}}\int_{0+}^t(Y_s)_{i,j}\dx (s^p)\leq\frac{1}{t^p}\sup_{0<s\leq t}(Y_s)_{i,j},\ i=1,\dots,m,\ j=1,\dots n
\end{displaymath}
due to the integrand being an increasing function. As $wv=0$, both bounds converge to zero with probability one, hence
\begin{displaymath}
\lim_{t\downarrow0}\frac{1}{t^{2p}}\int_{0+}^tY_s\dx (s^p)=0
\end{displaymath}
almost surely. Thus, the limit for $t\downarrow0$ of~\eqref{eq-part-int} exists with probability one and is equal to zero.
\end{proof}

The above proposition is in particular applicable for solutions of Lévy-driven SDEs. An inspection of the proof of Proposition~\ref{prop-aslimitthroughSDE} shows that, almost surely,
\begin{displaymath}
[\sigma_{i,k}(X),L_k]_t=o([L_1,L_1]_t+\dots+[L_d,L_d]_t).
\end{displaymath}
Since $[L_k,L_k]_t=o(t^{2p})$ for any $k=1,\dots,d$ by Lemma~\ref{lem-quadraticvariation}, it follows that
\begin{displaymath}
[\sigma(X),L]_t=o(t^{2p})
\end{displaymath}
with probability one whenever $L$ is a Lévy process satisfying $\lim_{t\downarrow0}t^{-p}L_t=0$ a.s. and $X$ is the solution of~\eqref{eq-SDE}. We use this fact to consider the almost sure $\limsup$ and $\liminf$ behavior of the quotient $t^{-p}(X_t-x)$ including the divergent case. Note that the condition $\lim_{t\downarrow0}t^{-p/2}L_t=0$ a.s. is satisfied whenever $p/2>1/2$ and $\smash{\int_0^1x^{2/p}\nu_L(\dx x)<\infty}$ by~\cite[Thm.~2.1]{BertoinDoneyMaller2008}.

\begin{theorem}\label{prop-asdivergencethroughSDE}
Let $L$ be an $\R^{d}$-valued Lévy process such that $\lim_{t\downarrow0}t^{-p/2}L_t=0$ a.s. for some $p>0$. Further, let $\sigma:\R^{n}\rightarrow\R^{n\times d}$ be twice continuously differentiable and maximal of linear growth and define $X=(X_t)_{t\geq0}$ as the solution of the~SDE~\eqref{eq-SDE}. Then, almost surely,
\begin{equation}\label{eq-divergent1}
\lim_{t\downarrow0} \Big(\frac{X_t-x}{t^p}-\frac{\sigma(X_t)L_t}{t^p}\Big)=\lim_{t\downarrow0} \Big(\frac{X_t-x}{t^p}-\frac{\sigma(x)L_t}{t^p}\Big)=0.
\end{equation}
In particular, if $\sigma(x)$ has rank $d$, we have
\begin{equation}\label{eq-divergent2}
\lim_{t\downarrow0}\frac{\|L_t\|}{t^p}=\infty\ \mathrm{a.s.}\ \Rightarrow\ \lim_{t\downarrow0}\frac{\|X_t-x\|}{t^p}=\infty\ \mathrm{a.s.}
\end{equation}
\end{theorem}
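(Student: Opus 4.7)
The plan is to establish the key estimate $X_t-x=\sigma(X_t)L_t+o(t^p)$ a.s.; both equalities in~\eqref{eq-divergent1} then follow, since $(\sigma(X_t)-\sigma(x))L_t$ is bounded by $o(t^{p/2})\cdot o(t^{p/2})=o(t^p)$ once we know $X_t-x=o(t^{p/2})$. The latter is obtained immediately by applying Proposition~\ref{prop-aslimitthroughSDE} with $p/2$ in place of $p$ and $v=0$; the $C^1$-regularity of $\sigma$ then yields $\sigma(X_t)-\sigma(x)=o(t^{p/2})$ a.s.

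The identity that already appears in the proof of Proposition~\ref{prop-aslimitthroughSDE}, obtained by component-wise integration by parts on each $\sigma_{i,k}(X)\cdot L_k$, reads
\[
(X_t-x)_i-(\sigma(X_t)L_t)_i=-\sum_{k=1}^d\int_{0+}^t L_{k,s-}\,\dx\sigma_{i,k}(X_s)-\sum_{k=1}^d[\sigma_{i,k}(X),L_k]_t.
\]
The covariation sum is $o(t^p)$ by the observation in the paragraph preceding the theorem (applied with $p/2$ in place of $p$, so that $[L_k,L_k]_t=o(t^p)$ by Lemma~\ref{lem-quadraticvariation}). The crux is therefore to prove that each stochastic integral $\int_{0+}^t L_{k,s-}\,\dx\sigma_{i,k}(X_s)$ is $o(t^p)$ a.s.

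To do this, I perform a second integration by parts on the product $(\sigma_{i,k}(X)-\sigma_{i,k}(x))\cdot L_k$---both factors vanishing at $s=0$---which rewrites the integral as
\[
\bigl(\sigma_{i,k}(X_t)-\sigma_{i,k}(x)\bigr)L_{k,t}-\int_{0+}^t\bigl(\sigma_{i,k}(X_{s-})-\sigma_{i,k}(x)\bigr)\dx L_{k,s}-[\sigma_{i,k}(X),L_k]_t.
\]
The boundary term is $o(t^{p/2})\cdot o(t^{p/2})=o(t^p)$ by the preparatory step, and the covariation term is again $o(t^p)$. For the remaining integral I would invoke the second part of Proposition~\ref{prop-levydriven} with $p/2$ in place of $p$, applied to the scalar Lévy process $L_k$ and the scalar semimartingale $\sigma_{i,k}(X)-\sigma_{i,k}(x)$ (which vanishes at $0$): the hypothesis supplies $t^{-p/2}L_{k,t}\to 0=:v$, the preparatory step supplies $t^{-p/2}(\sigma_{i,k}(X_t)-\sigma_{i,k}(x))\to 0=:w$, and $wv=0$. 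Proposition~\ref{prop-levydriven} then delivers $t^{-p}\int_{0+}^t(\sigma_{i,k}(X_{s-})-\sigma_{i,k}(x))\,\dx L_{k,s}\to 0$, so this term is $o(t^p)$ as well.

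The divergence claim~\eqref{eq-divergent2} then follows: the full rank of $\sigma(x)\in\R^{n\times d}$ yields a constant $c>0$ with $\|\sigma(x)v\|\geq c\|v\|$ for every $v\in\R^d$, and the key estimate $X_t-x=\sigma(x)L_t+o(t^p)$ gives $\|X_t-x\|/t^p\geq c\|L_t\|/t^p-o(1)$, which tends to $\infty$ whenever $\|L_t\|/t^p$ does. The main obstacle is precisely the middle integral in the second integration by parts: a direct appeal to Lemma~\ref{lem-convergence-integral} with exponent $p/2$ only yields the weaker bound $o(t^{p/2})$, which is off by the critical factor of $t^{p/2}$. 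The second integration by parts is what trades the slow semimartingale integrator $\dx\sigma_{i,k}(X)$ for the Lévy integrator $\dx L_k$, exposing the Lévy structure so that the quadratic-rate improvement built into the second part of Proposition~\ref{prop-levydriven} can be invoked.
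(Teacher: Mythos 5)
Your proposal is correct, and it handles the crux of the proof by a genuinely different decomposition than the paper. Both arguments start from the same integration by parts, reducing \eqref{eq-divergent1} to showing that $\int_{0+}^t L_{k,s-}\,\dx\sigma_{i,k}(X_s)$ and $[\sigma_{i,k}(X),L_k]_t$ are $o(t^p)$, and both dispose of the covariation via Lemma~\ref{lem-quadraticvariation} at exponent $p/2$. The paper then attacks the remaining integral by applying the It$\overline{\mathrm{o}}$ formula to $\sigma_{i,k}(X)$ and estimating the three resulting pieces separately: the first-order term via the second part of Proposition~\ref{prop-levydriven} with exponent $p/2$ applied to the bounded integrand $(L_k)_{s-}M_{s-}$, and the continuous second-order and jump terms via Kunita--Watanabe and a Taylor bound against $\sum_j[L_j,L_j]_t$. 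You instead perform a second integration by parts against the centered process $\sigma_{i,k}(X)-\sigma_{i,k}(x)$ --- equivalently, up to the boundary term you reduce to the identity $X_t-x-\sigma(x)L_t=\int_{0+}^t(\sigma(X_{s-})-\sigma(x))\,\dx L_s$, with the two covariations in fact cancelling, though bounding each separately as you do is equally valid --- and then invoke the second part of Proposition~\ref{prop-levydriven} exactly once, the rate hypothesis $t^{-p/2}\bigl(\sigma_{i,k}(X_t)-\sigma_{i,k}(x)\bigr)\to0$ being supplied by Proposition~\ref{prop-aslimitthroughSDE} at scale $p/2$ together with the local Lipschitz property of $\sigma$; all hypotheses of that proposition (semimartingale integrand, existence of the limits $w=0$, $v=0$, $wv=0$) are indeed met, and there is no circularity since both propositions precede the theorem. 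What your route buys is brevity: the jump-term and second-order estimates disappear, having been absorbed once and for all into Proposition~\ref{prop-levydriven}; what the paper's route buys is that it works directly with the structure of $\dx\sigma(X)$ and only uses Proposition~\ref{prop-aslimitthroughSDE} at the very end, which also yields the intermediate observation $[\sigma(X),L]_t=o([L_1,L_1]_t+\dots+[L_d,L_d]_t)$ reused before the theorem. Your treatment of the passage from $\sigma(X_t)$ to $\sigma(x)$ matches the paper's, and your singular-value argument ($\|\sigma(x)v\|\geq c\|v\|$ when $\sigma(x)$ has rank $d$) is a correct filling-in of the step the paper declares immediate for \eqref{eq-divergent2}.
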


\begin{proof}
Similar to the proof of Proposition~\ref{prop-aslimitthroughSDE}, we use integration by parts and rewrite
\begin{equation}\label{eq-rewrite-growth}
\frac{X_t-x}{t^p}-\frac{\sigma(X_t)L_t}{t^p}=-\frac{1}{t^p}\int_{0+}^t\dx\sigma(X_s)L_{s-}-\frac{1}{t^p}[\sigma(X),L]_t.
\end{equation}
The claim follows by showing that the desired limiting behavior for the right-hand side. For the term involving the quadratic variation, this is immediate from the previous calculations. Hence, it remains to study the behavior of the integral. Using the It$\mathrm{\overline{o}}$ formula once more yields
\begin{align*}
\Big(\int_{0+}^t\dx \sigma(X_s)L_{s-}\Big)_i&=\sum_{k=1}^d\int_{0+}^t(L_k)_{s-}\dx\sigma_{i,k}(X_s) \\
&=\sum_{k=1}^d\int_{0+}^t(L_k)_{s-}\dx\Big(\sigma_{i,k}(x)+\sum_{j=1}^n\int_{0+}^s\frac{\partial\sigma_{i,k}}{\partial x_j}(X_{r-})\dx (X_j)_r\nonumber\\
&\quad+\frac{1}{2}\sum_{j_1,j_2=1}^n\int_{0+}^s\frac{\partial^2\sigma_{i,k}}{\partial x_{j_1}\partial x_{j_2}}(X_{r-})\dx \big[X_{j_1},X_{j_2}\big]^c_r+(J_{i,k})_s\Big)\nonumber,
\end{align*}
where the jump term is again denoted by $J$ and the component of $\sigma$ included in it is carried as a subscipt. Observe that by associativity of the stochastic integral, it follows that
\begin{align*}
&\int_{0+}^t(L_k)_{s-}\dx\Big(\int_{0+}^s\frac{\partial^2\sigma_{i,k}}{\partial x_{j_1}\partial x_{j_2}}(X_{r-})\dx \big[X_{j_1},X_{j_2}\big]_r^c\Big)\\
&=\sum_{l_1=1}^n\sum_{l_2=1}^n\int_{0+}^t(L_k)_{s-}\frac{\partial^2\sigma_{i,k}}{\partial x_{j_1}\partial x_{j_2}}(X_{s-})\sigma_{j_1,l_1}(X_{s-})\sigma_{j_2,l_2}(X_{s-})\dx[L_{l_1},L_{l_2}]_s^c\\
&=:\sum_{l_1=1}^n\sum_{l_2=1}^n\int_{0+}^t(L_k)_{s-}M_{s-}\dx[L_{l_1},L_{l_2}]_s^c
\end{align*}
which is a sum of pathwise Lebesgue-Stieltjes integrals. Thus,
\begin{align*}
&\frac{1}{t^p}\Big|\int_{0+}^t(L_k)_{s-}\dx\Big(\int_{0+}^s\frac{\partial^2\sigma_{i,k}}{\partial x_{j_1}\partial x_{j_2}}(X_{r-})\dx \big[X_{j_1},X_{j_2}\big]_r^c\Big)\Big|\\
&\leq\frac{1}{t^p}\sum_{l_1=1}^d\sum_{l_2=1}^d\sup_{0<s\leq t}\big|(L_k)_{s-}M_{s-}\big|\sqrt{[L_{l_1},L_{l_1}]_t}\sqrt{[L_{l_2},L_{l_2}]_t}.
\end{align*}
As $\sigma$ is in particular $C^2$, the supremum on the right-hand side is bounded and we conclude that the bound obtained converges to zero with probability one by Lemma~\ref{lem-quadraticvariation}. For the jump term we have
\begin{displaymath}
\frac{1}{t^p}\Big|\int_{0+}^t(L_k)_{s-}\dx (J_{i,k})_s\Big|\leq\frac{1}{t^p}\sum_{0<s\leq t}|(L_k)_{s-}|\cdot|\Delta (J_{i,k})_s|
\end{displaymath}
by definition. However, since $\sigma\in C^2(\R^d)$, it follows from Taylor's formula that
\begin{displaymath}
|\Delta (J_{i,k})_s|=\Big|\sigma_{i,k}(X_s)-\sigma_{i,k}(X_{s-})-\sum_{j=1}^n\frac{\partial\sigma_{i,k}}{\partial x_j}(X_{s-})\Delta(X_j)_s\Big|\leq C\|\Delta X_s\|^2\leq C' \|\Delta L_s\|^2
\end{displaymath}
for some finite (random) constants $C,C'\geq0$ such that
\begin{displaymath}
\frac{1}{t^p}\Big|\int_{0+}^t(L_k)_{s-}\dx (J_{i,k})_s\Big|\leq \frac{1}{t^p}C'\sup_{0<s\leq t}|(L_k)_s|\sum_{j=1}^d[L_j,L_j]_t,
\end{displaymath}
which also converges a.s. to zero as $t\downarrow0$. For the last term, observe first that
\begin{displaymath}
\int_{0+}^t(L_k)_{s-}\dx\Big(\int_{0+}^s\frac{\partial\sigma_{i,k}}{\partial x_j}(X_{r-})\dx (X_j)_r\Big)=\sum_{l=1}^d\int_{0+}^t(L_k)_{s-}\frac{\partial\sigma_{i,k}}{\partial x_j}(X_{s-})\sigma_{j,l}(X_{s-})\dx (L_l)_s
\end{displaymath}
by the associativity of the stochastic integral. Including the summation over $k$ and $j$, this can be rewritten as
\begin{align*}
\sum_{k=1}^d\sum_{l=1}^d\int_{0+}^t(L_k)_{s-}\Big(\sum_{j=1}^d\frac{\partial\sigma_{i,k}}{\partial x_j}(X_{s-})\sigma_{j,l}(X_{s-})\Big)\dx (L_l)_s=\sum_{k=1}^d\sum_{l=1}^d\int_{0+}^t(L_k)_{s-}(M_{i,k,l})_{s-}\dx (L_l)_s,
\end{align*}
where we note that $\sup_{0<s\leq t}|(M_{i,k,l})_s|$ is bounded for any fixed small $t\geq0$ and continuous at $0$ due to the continuity of $\sigma$ and its derivatives. Since $\lim_{t\downarrow0}t^{-p/2}L_t=0$ with probability one, it follows that, almost surely, $\lim_{t\downarrow0}t^{-p/2}(L_k)_t(M_{i,k,l})_t$ exists. Thus, the second part of Proposition~\ref{prop-levydriven} is applicable and one can conclude that the integral also converges to zero with probability one.  Since $\lim_{t\downarrow0}t^{-p/2}L_t=0$ with probability one, we have
\begin{align}
0&\leq\Big\|\frac{\sigma(X_t)L_t}{t^p}-\frac{\sigma(x)L_t}{t^p}\Big\|\leq\Big\|\frac{\sigma(X_t)-\sigma(x)}{t^{p/2}}\Big\|\cdot\Big\|\frac{L_t}{t^{p/2}}\Big\|\nonumber\\
&\leq\sum_{j=1}^n\sup_{0<s\leq t}\Big\|\frac{\partial\sigma}{\partial x_j}(X_t)\Big\|\cdot\Big\|\frac{X_t-x}{t^{p/2}}\Big\|\cdot\Big\|\frac{L_t}{t^{p/2}}\Big\|.\label{eq-switchforsigmax}
\end{align}
As $t\downarrow0$, the first term converges with probablity one by the assumptions on $\sigma$ and Proposition~\ref{prop-aslimitthroughSDE} is applicable for the second one. Using that $\lim_{t\downarrow0}t^{-p/2}L_t=0$ almost surely, the right-hand side of~\eqref{eq-switchforsigmax} converges to zero with probability one as $t\downarrow0$. If $\sigma(x)$ has rank $d$, Equation~\eqref{eq-divergent2} follows immediately from the convergence result in~\eqref{eq-divergent1}.
\end{proof}

Theorem~\ref{prop-asdivergencethroughSDE} allows to characterize the a.s. short-time behavior of the solution to a Lévy-driven SDE in terms of power law functions. In order to derive precise LIL-type results, we now turn to more general functions. Note that, whenever the driving Lévy process has a Gaussian component, its a.s. short-time behavior is determined by Khintchine's~LIL~(see e.g.~\cite[Prop.~47.11]{Sato2013}). Hence, Lemma~\ref{lem-quadraticvariation} readily generalizes to continuous increasing functions~$f:[0,\infty)\rightarrow\R$ with $f(0)=0$ and $f(t)>0$ for all $t>0$, as any function $f$ such that~$\lim_{t\downarrow0}L_t/f(t)$ exists in $\R$ must satisfy $\sqrt{2t\ln(\ln(1/t))}/f(t)\rightarrow0$ and it follows
\begin{equation}\label{eq-limitqv}
\lim_{t\downarrow0}\frac{[L,L]_t}{(f(t))^2}=\lim_{t\downarrow0}\Big(\frac{[L,L]_t}{t}\frac{t}{2t\ln(\ln (1/t))}\frac{2t\ln(\ln (1/t))}{(f(t))^2}\Big)=0\ \mathrm{a.s.}
\end{equation}
by~\cite[Thm.~1]{Shtatland1965}. Thus, $[L,L]_t=o(f(t)^2)$ and we can replace the function $t^{p/2}$ for some~${p>0}$ in Theorem~\ref{prop-asdivergencethroughSDE} by $f$ in this case and obtain a precise short-time behavior for the solutions of stochastic differential equations that include a diffusion part. In the case that~$L$ does not include a Gaussian component, $[L,L]$ is a finite variation process without drift satisfying $\lim_{t\downarrow0}t^{-1}[L,L]_t=0$ a.s. by~\cite[Thm.~1]{Shtatland1965}, such that an argument similar to~\eqref{eq-limitqv} is still applicable if $f$ decays sufficiently fast as $t\downarrow0$. For the general case, we combine Theorem~\ref{prop-asdivergencethroughSDE} with the precise information on possible scaling functions derived in~\cite{Einmahl2019}. Note that the conditions of Corollary~\ref{cor-aslimsup-general} below immediately follow from Khinthchine's LIL whenever~$h\equiv1$, as the process does not include a Gaussian part by assumption.

\begin{corollary}\label{cor-aslimsup-general}
Let $L$ be a purely non-Gaussian $\R^{d}$-valued Lévy process and ${f:[0,\infty)\rightarrow\R}$ be of the form~$f(t)=\sqrt{t\ln(\ln(1/t)}h(1/t)^{-1}$, where $h:[0,\infty)\rightarrow[0,\infty)$ is a continuous and non-decreasing slowly varying function, such that the set of cluster points of $L_t/f(t)$ as $t\downarrow0$ is bounded with probability one. Further, let $\sigma:\R^{d}\rightarrow\R^{n\times d}$ be twice continuously differentiable and maximal of linear growth and define $X=(X_t)_{t\geq0}$ as the solution of~\eqref{eq-SDE}. Then, almost surely,
\begin{equation}\label{eq-divergent3}
\lim_{t\downarrow0} \Big(\frac{X_t-x}{f(t)}-\frac{\sigma(X_t)L_t}{f(t)}\Big)=\lim_{t\downarrow0} \Big(\frac{X_t-x}{f(t)}-\frac{\sigma(x)L_t}{f(t)}\Big)=0.
\end{equation}
In particular, if $\sigma(x)$ has rank $d$, we have
\begin{displaymath}
\lim_{t\downarrow0}\frac{\|L_t\|}{f(t)}=\infty\ \mathrm{a.s.}\ \Rightarrow\ \lim_{t\downarrow0}\frac{\|X_t-x\|}{f(t)}=\infty\ \mathrm{a.s.}
\end{displaymath}
\end{corollary}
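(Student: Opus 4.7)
The plan is to transpose the proof of Theorem~\ref{prop-asdivergencethroughSDE} to the present setting by replacing the scaling function $t^p$ with $f(t)$ throughout. Two ingredients take the place of the hypothesis $\lim_{t\downarrow0}t^{-p/2}L_t=0$: the quadratic variation estimate $[L,L]_t=o(f(t)^2)$ almost surely as $t\downarrow 0$, which is established in the paragraph preceding the corollary via equation~\eqref{eq-limitqv}, the purely non-Gaussian assumption on $L$, and the explicit form of $f$; and the a.s. boundedness of $\|L_t\|/f(t)$ near~$0$, which is immediate from the hypothesis that the cluster set at~$0$ is bounded.

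Starting from the partial integration identity~\eqref{eq-rewrite-growth} with $f(t)$ in place of $t^p$, the first equality in~\eqref{eq-divergent3} reduces to showing that $\frac{1}{f(t)}\int_{0+}^t\dx\sigma(X_s)L_{s-}$ and $\frac{1}{f(t)}[\sigma(X),L]_t$ both vanish almost surely as $t\downarrow 0$. The covariation term is bounded by sums of $\sqrt{[L_j,L_j]_t}\sqrt{[L_k,L_k]_t}/f(t)$ via the Kunita--Watanabe inequality and vanishes by $[L,L]_t=o(f(t)^2)$. Expanding $\sigma(X_s)$ via It\^o's formula exactly as in the theorem splits the integral into second-derivative continuous-variation terms and jump-remainder terms, each bounded by expressions of the form $\sup_{s\leq t}|\cdot|\cdot[L,L]_t/f(t)$ and vanishing by the same estimate, together with first-order stochastic integrals of the form $\frac{1}{f(t)}\int_{0+}^t(L_k)_{s-}M_{s-}\dx(L_l)_s$, where $M$ is a bounded semimartingale continuous at~$0$.

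The main obstacle is this last group of terms: in Theorem~\ref{prop-asdivergencethroughSDE} they were handled via Proposition~\ref{prop-levydriven}(ii), which requires a convergence hypothesis at the square-root scale, whereas the present setting only supplies boundedness of $L_t/f(t)$. However, an inspection of the proof of Lemma~\ref{lem-convergence-integral} reveals that actual convergence of $\phi_t/t^p$ is not used; what is needed is only that the process $\psi_t:=\phi_t/t^p$ can be taken as an adapted, locally bounded c\`agl\`ad process so that $Y_t=\int_{0+}^t\psi_{s-}\dx X_s$ is a well-defined semimartingale with $Y_0=0$. Since $(L_k)_{s-}M_{s-}/f(s)$ is bounded for $s$ near~$0$ by the boundedness of $L_t/f(t)$ and the continuity of $M$ at~$0$, this boundedness-only variant of Lemma~\ref{lem-convergence-integral} (applied with $f$ in place of $t^p$, as permitted by the second remark following the lemma) yields the desired vanishing. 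Together with the mean value estimate
\begin{displaymath}
\Bigl\|\frac{\sigma(X_t)L_t-\sigma(x)L_t}{f(t)}\Bigr\|\leq C\|X_t-x\|\cdot\frac{\|L_t\|}{f(t)}\longrightarrow 0,
\end{displaymath}
valid by continuity of $X$ at~$0$ and the boundedness of $L_t/f(t)$, this establishes both equalities in~\eqref{eq-divergent3}; the rank-$d$ conclusion then follows from the lower bound $\|\sigma(x)L_t\|\geq c\|L_t\|$ with $c=\sqrt{\lambda_{\min}(\sigma(x)^T\sigma(x))}>0$.
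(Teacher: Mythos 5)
Your route is genuinely different from the paper's. The paper does not redo Theorem~\ref{prop-asdivergencethroughSDE} at the scale $f$: it observes that $f(t)=t^{1/2}\ell(1/t)$ with $\ell$ slowly varying, so boundedness of $L_t/f(t)$ gives $\lim_{t\downarrow0}t^{-(1/2-\varepsilon)}L_t=0$ a.s., applies Theorem~\ref{prop-asdivergencethroughSDE} with $p=1-2\varepsilon$ for $\varepsilon\in(0,1/4)$, and then upgrades via $t^{1-2\varepsilon}/f(t)\to0$. Your plan of rerunning the whole proof with $f$ in place of $t^p$ is workable in outline, and two of your substitutions are sound: the boundedness-only variant of Lemma~\ref{lem-convergence-integral} is true (though you should note that with mere boundedness the process $\psi_s=\phi_s/f(s)$ need not be c\`agl\`ad at $0$, so you must argue instead that it is predictable and locally bounded for $Y$ to be well defined, after which the sandwich argument is unchanged), and your mean-value replacement of the paper's~\eqref{eq-switchforsigmax} step, using only $X_t\to x$ and boundedness of $L_t/f(t)$, is fine, as is the $\lambda_{\min}$ argument for the rank-$d$ statement.

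The genuine gap is the quadratic-variation ingredient. You claim $[L,L]_t=o(f(t)^2)$ a.s.\ and attribute it to the paragraph containing~\eqref{eq-limitqv}; that paragraph does not give it here. The argument in~\eqref{eq-limitqv} requires $\sqrt{2t\ln(\ln(1/t))}/f(t)\to0$, whereas for the corollary's $f$ one has $\sqrt{t\ln(\ln(1/t))}/f(t)=h(1/t)$, which is non-decreasing and may tend to infinity (e.g.\ $h(u)=\ln(e+u)$); likewise $t/f(t)^2=h(1/t)^2/\ln(\ln(1/t))$ need not stay bounded, so Shtatland's $[L,L]_t=o(t)$ does not yield $o(f(t)^2)$ either, and it is not clear that $o(f(t)^2)$ holds at all under the stated hypotheses (jumps of size comparable to $f(t)$ occurring infinitely often are compatible with a bounded cluster set but each contributes a fixed multiple of $f(t)^2$ to $[L,L]_t$). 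Fortunately your proof does not need this strength: every term you control with it — the covariation bound $C\,[L,L]_t/f(t)$ from Kunita--Watanabe, the second-derivative terms, and the jump-remainder terms, all of the form $\sup(\cdot)\cdot[L,L]_t/f(t)$ with bounded suprema — vanishes already if $[L,L]_t=o(f(t))$. This weaker estimate is true and easy: since $L$ is purely non-Gaussian, $[L,L]$ is a driftless finite-variation L\'evy process, so $[L,L]_t/t\to0$ a.s.\ by Shtatland, and $t/f(t)=\sqrt{t}\,h(1/t)/\sqrt{\ln(\ln(1/t))}\to0$ by slow variation of $h$. With that correction (and the local-boundedness remark above) your argument closes; as written, however, the stated justification of the key estimate is wrong, and the estimate itself is stronger than anything available.
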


\begin{proof}
As the scaling function is of the form $f(t)=t^{1/2}\ell(1/t)$ with a slowly varying function $\ell$ by assumption, the a.s. boundedness of the cluster points of $L_t/f(t)$ in~$\R^d$ implies that, for all $\varepsilon\in(0,1/2)$,
\begin{displaymath}
\lim_{t\downarrow0}\frac{L_t}{t^{(1/2-\varepsilon)}}=\lim_{t\downarrow0}\frac{L_t}{f(t)}\cdot \ell(1/t)t^{\varepsilon}=0
\end{displaymath}
with probability one. Thus, Theorem~\ref{prop-asdivergencethroughSDE} is applicable with $p/2=1/2-\varepsilon$, yielding
\begin{displaymath}
\lim_{t\downarrow0} \Big(\frac{X_t-x}{t^{1-2\varepsilon}}-\frac{\sigma(x)L_t}{t^{1-2\varepsilon}}\Big)=0
\end{displaymath}
almost surely. Using the explicit form of $f$ and choosing $\varepsilon\in(0,1/4)$, it follows that
\begin{displaymath}
\lim_{t\downarrow0}\Big(\frac{X_t-x-\sigma(x)L_t}{f(t)}\Big)=\lim_{t\downarrow0} \Big(\frac{X_t-x-\sigma(x)L_t}{t^{1-2\varepsilon}}\cdot\frac{t^{1-2\varepsilon}}{f(t)}\Big)=0
\end{displaymath}
with probability one, which is~\eqref{eq-divergent3}, and the remaining claims follow in analogy to the proof of Theorem~\ref{prop-asdivergencethroughSDE}.
\end{proof}

The above results show that the almost sure short-time LIL-type behavior of the driving Lévy process directly translates to the solution of the stochastic differential equation~\eqref{eq-SDE}. We also note the following statement for the conversion of the corresponding cluster set.
\begin{corollary}\label{cor-clustersets}
Under the assumptions of Corollary~\ref{cor-aslimsup-general} let $\limsup_{t\downarrow0}\|L_t\|/f(t)$ be bounded with probability one. Then there exists an a.s. cluster set ${A_X=C(\{X_t/f(t):t\downarrow0\})}$ for the solution $X$ of~\eqref{eq-SDE} which is obtained from the cluster set of ${A_L=C(\{L_t/f(t):t\downarrow0\})}$ of the driving Lévy process~$L$ via $A_X=\sigma(x)A_L$.
\end{corollary}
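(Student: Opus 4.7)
The plan is to read off the cluster set of $(X_t-x)/f(t)$ as $t\downarrow 0$ directly from the linear approximation supplied by Corollary~\ref{cor-aslimsup-general} and a standard continuous-mapping argument for cluster sets. First, by Corollary~\ref{cor-aslimsup-general}, almost surely
\begin{displaymath}
\frac{X_t-x}{f(t)}=\frac{\sigma(x)L_t}{f(t)}+R_t,\qquad R_t\to 0\ \text{a.s. as }t\downarrow 0.
\end{displaymath}
Since adding a null sequence to a family of points does not change its cluster set, it suffices to identify the cluster set of $\sigma(x)L_t/f(t)$ as $t\downarrow 0$.

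Next, I would invoke the following elementary fact: if $\phi:\R^d\to\R^n$ is continuous and $\{a_t\}_{t\downarrow 0}\subset\R^d$ has bounded orbit (i.e.\ $\limsup_{t\downarrow 0}\|a_t\|<\infty$), then $C(\{\phi(a_t):t\downarrow 0\})=\phi(C(\{a_t:t\downarrow 0\}))$. The inclusion $\phi(C(\{a_t\}))\subseteq C(\{\phi(a_t)\})$ is immediate from sequential continuity. For the reverse inclusion, take $w\in C(\{\phi(a_t)\})$ and a sequence $t_n\downarrow 0$ with $\phi(a_{t_n})\to w$; by boundedness, extract a subsequence along which $a_{t_{n_k}}\to v\in C(\{a_t\})$, and continuity yields $w=\phi(v)\in\phi(C(\{a_t\}))$. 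I would apply this with $\phi(v)=\sigma(x)v$ (a continuous linear map) and $a_t=L_t/f(t)$; the required boundedness is exactly the hypothesis $\limsup_{t\downarrow 0}\|L_t\|/f(t)<\infty$ a.s. This gives
\begin{displaymath}
C\bigl(\{\sigma(x)L_t/f(t):t\downarrow 0\}\bigr)=\sigma(x)\,C\bigl(\{L_t/f(t):t\downarrow 0\}\bigr)=\sigma(x)A_L\ \text{a.s.}
\end{displaymath}

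Finally, I would combine the two steps. The identity $(X_t-x)/f(t)=\sigma(x)L_t/f(t)+R_t$ with $R_t\to 0$ implies
\begin{displaymath}
C\bigl(\{(X_t-x)/f(t):t\downarrow 0\}\bigr)=C\bigl(\{\sigma(x)L_t/f(t):t\downarrow 0\}\bigr)=\sigma(x)A_L\ \text{a.s.},
\end{displaymath}
which, setting $A_X:=\sigma(x)A_L$, is the claim. The only non-routine point in the argument is the continuous-mapping statement for cluster sets, whose reverse inclusion genuinely requires the a.s.\ boundedness of $L_t/f(t)$; without this hypothesis one cannot extract convergent subsequences of $a_t$ from convergent subsequences of $\phi(a_t)$, and the equality of cluster sets can fail. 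Everything else is a direct consequence of Corollary~\ref{cor-aslimsup-general}.
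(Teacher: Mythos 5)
Your proposal is correct and follows exactly the route the paper intends: Corollary~\ref{cor-clustersets} is stated there without proof as an immediate consequence of Corollary~\ref{cor-aslimsup-general}, and your decomposition $(X_t-x)/f(t)=\sigma(x)L_t/f(t)+R_t$ together with the continuous-mapping/compactness argument for cluster sets (where the a.s. boundedness of $\|L_t\|/f(t)$ is used precisely for the reverse inclusion, as you note) is the routine verification the paper omits. The only cosmetic point is that the paper's notation $C(\{X_t/f(t):t\downarrow0\})$ must be read as the cluster set of $(X_t-x)/f(t)$, which is the reading you adopt.
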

Corollary~\ref{cor-clustersets} implies in particular that $A_X$ shares the properties of $A_L$ derived in~\cite[Thm.~2.4]{Einmahl2019} and that there is a one-to-one correspondence between the cluster sets whenever~$\sigma(x)$ has rank $d$. As $\sigma(x)=\Id$ for the stochastic exponential, we have $A_X=A_L$ for this example, mirroring the statement of Corollary~\ref{cor-equivalence-exponential}.

\medskip
Lastly, we use Theorem~\ref{prop-asdivergencethroughSDE} to show that one can also translate more general limiting results at zero from the driving Lévy process to the solution of~\eqref{eq-SDE}. Here, convergence in distribution and convergence in probability are denoted by $\smash{\overset{\cD}{\rightarrow}}$ and $\smash{\overset{P}{\rightarrow}}$, respectively. As the short-time behavior of Brownian motion is well-known, $L$ is taken to be a purely non-Gaussian Lévy process and we further assume the drift of $L$, whenever existent, to be equal to zero. Note that sufficient conditions for the attraction of a Lévy process to normality are e.g. given in~\cite[Thm.~2.5]{DoneyMaller2002}. Thus, the conditions of Corollary~\ref{cor-Dconv} below are readily checked from the characteristic triplet of $L$ and e.g. satisfied for a Lévy process with a symmetric Lévy measure such as~$\nu_L(\dx x)=\exp(-|x|)\I_{[-1,1]}(x)\dx x$.

\begin{corollary}\label{cor-Dconv}
Let $d=1$, $L$ as specified above and assume that there is a continuous increasing function $f:[0,\infty)\rightarrow[0,\infty)$ such that $f(t)^{-1}L_t\smash{\overset{\cD}{\rightarrow}}Y$ as $t\downarrow0$, where the random variable $Y$ follows a non-degenerate stable law with index~$\alpha\in(0,2]$. Let further $\sigma:\R\rightarrow\R$ be twice continuously differentiable and maximal of linear growth and define $X=(X_t)_{t\geq0}$ as the solution of~\eqref{eq-SDE} such that the initial condition~$x\in\R^{n}$ satisfies $\sigma(x)\neq0$. Then
\begin{equation}\label{eq-Dconv}
\frac{X_t-x}{f(t)}\overset{\cD}{\rightarrow}\sigma(x)Y.
\end{equation}
Whenever $f$ is regularly varying with index $a\in(0,1/2]$ at zero,~\eqref{eq-Dconv} also holds if the random variable $Y$ is a.s. constant.
\end{corollary}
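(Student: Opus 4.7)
The plan is to reduce the claim to an application of Slutsky's theorem. Since the hypothesis immediately gives $\sigma(x) L_t/f(t) \overset{\cD}{\to} \sigma(x) Y$, it suffices to show that the residual $(X_t - x)/f(t) - \sigma(x) L_t/f(t)$ converges to zero in probability. To accomplish this, I will produce a power $p>0$ satisfying the two conditions (i) $t^{-p/2} L_t \to 0$ a.s., which allows Theorem~\ref{prop-asdivergencethroughSDE} to yield $(X_t - x - \sigma(x) L_t)/t^p \to 0$ a.s., and (ii) $t^p/f(t) \to 0$ as $t \downarrow 0$. Multiplying the two factors then gives $(X_t - x - \sigma(x) L_t)/f(t) \to 0$ almost surely, which is stronger than what is needed. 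In the degenerate case $\sigma(x) Y$ is a.s.\ constant and the distributional convergence is equivalent to convergence in probability, but the Slutsky step is unaffected.

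The identification of a valid $p$ hinges on the regular-variation index of $f$ at zero, which I denote by $a_*$. In the non-degenerate case the convergence of types theorem combined with the scaling property of the $\alpha$-stable law $Y$ forces $f$ to be regularly varying of index $a_* = 1/\alpha$; in the degenerate case $a_* = a \in (0, 1/2]$ is built into the hypothesis. Writing $f(t) = t^{a_*}\ell(t)$ with $\ell$ slowly varying, condition~(ii) amounts to $p > a_*$ by a Potter-bounds argument.

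For condition~(i) I split according to $a_*$. When $a_* \leq 1/2$---that is, in the degenerate case or in the non-degenerate subcase $\alpha = 2$---Khintchine's LIL~\cite[Prop.~47.11]{Sato2013} applied to the purely non-Gaussian, zero-drift Lévy process $L$ yields $t^{-q} L_t \to 0$ a.s.\ for every $q < 1/2$, so any $p \in (a_*, 1)$ fulfills (i) and (ii). When $a_* = 1/\alpha > 1/2$ (so $\alpha \in (0,2)$), attraction of $L$ to a non-Gaussian $\alpha$-stable law forces the Lévy tail to be regularly varying of index $-\alpha$, whence $\int_{[-1,1]} |x|^\beta \, \nu_L(\dx x) < \infty$ for every $\beta > \alpha$; the Bertoin-Doney-Maller criterion~\cite[Thm.~2.1]{BertoinDoneyMaller2008} then delivers $t^{-p/2} L_t \to 0$ a.s.\ for every $p \in (1/\alpha, 2/\alpha)$, and any such $p$ completes the argument. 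The main technical hurdle is precisely this last step: extracting the tail behaviour of $\nu_L$ from stable attraction and verifying that the BDM integrability criterion, together with the drift condition (the latter being automatic from the standing assumption of zero drift), is satisfied for the chosen $p$.
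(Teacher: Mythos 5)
Your argument is correct, and its skeleton is the same as the paper's: find one exponent $p$ with (i) $t^{-p/2}L_t\to0$ a.s., so that Theorem~\ref{prop-asdivergencethroughSDE} gives $(X_t-x-\sigma(x)L_t)/t^p\to0$ a.s., and (ii) $t^p/f(t)\to0$, then conclude by Slutsky. Where you differ is in how (i) is verified. The paper extracts from \cite[Prop.~4.1]{MallerMason2008} the tail condition $t\overline{\Pi}_L(xf(t))\to0$ (resp.\ $\to$ the stable tail), combines it with the regular variation of $f$ (index $1/2$ resp.\ $1/\alpha$, cited from \cite{DoneyMaller2002} and \cite{MallerMason2008}) to obtain integrability of $\overline{\Pi}_L(t^{(1/2-\varepsilon)k})$ over $[0,1]$, and then invokes the integrated-tail form of \cite[Thm.~2.1]{BertoinDoneyMaller2008}. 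You instead (a) in the cases $a_*\le1/2$ ($\alpha=2$ and the degenerate case) bypass the tail machinery entirely and use Khintchine's LIL, which indeed gives $t^{-q}L_t\to0$ a.s.\ for every $q<1/2$ for any Lévy process, so any $p\in(a_*,1)$ works -- this is a genuinely shorter route than the paper's for $\alpha=2$; and (b) for $\alpha<2$ use that small-time stable attraction forces $\overline{\Pi}_L$ to be regularly varying of index $-\alpha$ at zero (the same Maller--Mason input the paper uses, just packaged differently), whence $\int_{[-1,1]}|x|^{2/p}\nu_L(\dx x)<\infty$ for $p<2/\alpha$ and the fractional-moment form of \cite[Thm.~2.1]{BertoinDoneyMaller2008} applies; your derivation of the index of $f$ by convergence of types replaces the paper's citations and is standard. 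Two small imprecisions, neither fatal: the assertion that BDM ``delivers $t^{-p/2}L_t\to0$ for every $p\in(1/\alpha,2/\alpha)$'' is loose at the lower end, since the criterion in the form you quote needs $p/2>1/2$; for $p\le1$ you should appeal to Khintchine instead, and since only one $p$ is needed you may simply take $p\in(\max\{1,1/\alpha\},2/\alpha)$, which is nonempty. Also, when $1/\alpha>1$ (so $p/2>1$ may be forced), BDM additionally needs $L$ of finite variation with zero drift; finite variation follows from the regular variation of $\overline{\Pi}_L$ with index $-\alpha$, $\alpha<1$, and zero drift is the standing assumption, so this is covered but worth stating explicitly.
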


\begin{proof}
If $\alpha=2$, i.e. $Y$ is normally disributed, the convergence of $L_t/f(t)$ implies that
\begin{equation}\label{eq-MMconvergence}
\lim_{t\downarrow0}t\overline{\Pi}^{(\#)}_L(xf(t))=0
\end{equation}
for all $x>0$ and $\#\in\{+,-\}$ by~\cite[Prop.~4.1]{MallerMason2008}. Choosing $x=1$, note that the condition~\eqref{eq-MMconvergence} is not sufficient to imply the integrability of $\smash{\overline{\Pi}^{(\#)}_L(f(t))}$ over $[0,1]$. However, since the  distribution of $Y$ is non-degenerate, the scaling function  $f$ is regularly varying with index~$1/2$ at zero (see~\cite[Thm.~2.5]{DoneyMaller2002}) such that also
\begin{displaymath}
\lim_{t\downarrow0}t\overline{\Pi}^{\#}L(t^{1/2-\varepsilon})=0.
\end{displaymath}
This yields the estimate
\begin{equation}\label{eq-tailbound}
\overline{\Pi}^{\#}_L(t^{(1/2-\varepsilon)k})\leq\frac{C_t}{t^k}
\end{equation}
where $C_t$ is bounded as $t\downarrow0$ and the function is thus integrable over $[0,1]$ for $0\leq k<1$. By assumption, $L$ does not have a Gaussian component and the drift of the process is equal to zero whenever it is defined. Hence,
\begin{displaymath}
\int_{0+}^t\overline{\Pi}^{\#}(t^{(1/2-\varepsilon)k})\dx t<\infty
\end{displaymath}
for both $\#=+$ and $\#=-$ and thus $\lim_{t\downarrow0}t^{-(1/2-\varepsilon)k}L_t=0$ a.s. by~\cite[Thm.~2.1]{BertoinDoneyMaller2008}. Applying Theorem~\ref{prop-asdivergencethroughSDE}, we obtain
\begin{displaymath}
\lim_{t\downarrow0} \Big(\frac{X_t-x}{t^{k-2\varepsilon k}}-\frac{\sigma(x)L_t}{t^{k-2\varepsilon k}}\Big)=0
\end{displaymath}
with probability one. It now follows for $k-2\varepsilon k>1/2$ that
\begin{displaymath}
\lim_{t\downarrow0}\Big(\frac{X_t-x-\sigma(x)L_t}{f(t)}\Big)=\lim_{t\downarrow0}\Big(\frac{X_t-x-\sigma(x)L_t}{t^{k-2\varepsilon k}}\cdot\frac{t^{k-2\varepsilon k}}{f(t)}\Big)=0
\end{displaymath}
almost surely, which yields the desired convergence of $f(t)^{-1}(X_t-x)$. If $Y$ follows a nondegenerate stable law with index~$\alpha\in(0,2)$, the right-hand side of~\eqref{eq-MMconvergence} is to be replaced by the tail function~$\smash{\overline{\Pi}^{\#}_Y(x)}$~(see~\cite[Prop.~4.1]{MallerMason2008}) and it follows from the proof of~\cite[Thm.~2.3]{MallerMason2008} that the scaling function $f$ is regularly varying with index~$1/\alpha$ at zero in this case. Thus, we can derive a bound similar to~\eqref{eq-tailbound} and argue as before. Noting that \cite[Prop.~4.1]{MallerMason2008} does not require the law of the limiting random variable to be non-degenerate, the argument is also applicable if $Y$ is a.s. constant and $f$ is regularly varying with index~$a\in(0,1/2]$ at zero.
\end{proof}

One can also give a result for convergence in probability. The conditions can be checked directly from the characteristic triplet of the driving Lévy process using~\cite[Thm.~2.2]{DoneyMaller2002} and are e.g. satisfied for finite variation Lévy processes. As the limiting random variable is a.s. constant, the proof is immediate from Corollary~\ref{cor-Dconv}.

\begin{corollary}\label{cor-Pconv}
Let $d=1$, $L$ as above and assume that there is a continuous increasing function $f:[0,\infty)\rightarrow[0,\infty)$ that is regularly varying with index $a\in(0,1/2]$ at zero such that $f(t)^{-1}L_t\smash{\overset{P}{\rightarrow}}v$ for some finite value $v\in\R$ as $t\downarrow0$. Let further $\sigma:\R\rightarrow\R$ be twice continuously differentiable and maximal of linear growth and define $X=(X_t)_{t\geq0}$ as the solution of~\eqref{eq-SDE}. Then
\begin{displaymath}
\frac{X_t-x}{f(t)}\overset{P}{\rightarrow}\sigma(x)v.
\end{displaymath}
\end{corollary}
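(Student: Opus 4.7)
The plan is to reduce to Corollary~\ref{cor-Dconv}, exploiting the standard fact that convergence in distribution to a deterministic limit is equivalent to convergence in probability to that limit.

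First I would rephrase the hypothesis: since $v\in\R$ is a constant, the assumption $f(t)^{-1}L_t \overset{P}{\rightarrow} v$ is equivalent to $f(t)^{-1}L_t \overset{\cD}{\rightarrow} v$, i.e.\ the distributional limit exists and equals the degenerate law $\delta_v$. Setting $Y\equiv v$, this $Y$ is a random variable that is a.s.\ constant. By assumption, $f$ is continuous, increasing, and regularly varying at zero with index $a\in(0,1/2]$, and $L$ satisfies the standing conditions (purely non-Gaussian, drift zero whenever it exists) inherited from the paragraph preceding Corollary~\ref{cor-Dconv}. Thus the hypotheses of the degenerate-limit clause of Corollary~\ref{cor-Dconv} are satisfied verbatim, and applying that corollary yields $f(t)^{-1}(X_t-x)\overset{\cD}{\rightarrow}\sigma(x)v$. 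Because $\sigma(x)v$ is itself deterministic, convergence in distribution to $\sigma(x)v$ coincides with convergence in probability to $\sigma(x)v$, which is the claim.

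The only point meriting a brief check is that Corollary~\ref{cor-Dconv} formally carries the side condition $\sigma(x)\neq 0$ while Corollary~\ref{cor-Pconv} drops it. Inspecting the proof of Corollary~\ref{cor-Dconv} shows that $\sigma(x)\neq 0$ is used only to guarantee non-degeneracy of the distributional limit in the non-degenerate $Y$ case; in the degenerate case used here the argument applies to any $\sigma(x)\in\R$, with the case $\sigma(x)=0$ simply producing the trivial limit $0$. There is therefore no real obstacle: the bulk of the work (the tail bound on $\overline{\Pi}^{(\#)}_L$ via~\cite{MallerMason2008}, the application of Theorem~\ref{prop-asdivergencethroughSDE} with exponent $p = k - 2\varepsilon k$, and the regular-variation factorization through $t^{k-2\varepsilon k}/f(t)$) is already carried out inside the proof of Corollary~\ref{cor-Dconv}, and the present statement is a distillation of that argument into the probability-convergence setting.
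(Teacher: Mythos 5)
Your proof is correct and follows the paper's own route: the paper likewise observes that, since the limit $v$ is a.s.\ constant, convergence in probability and in distribution coincide, so the claim is immediate from the degenerate-limit clause of Corollary~\ref{cor-Dconv}. Your extra remark that the hypothesis $\sigma(x)\neq0$ is dispensable in the degenerate case is a sensible (and accurate) clarification, but it does not change the argument.
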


\section*{Acknowledgements}
I would like to thank Alexander Lindner for suggesting this topic and supervising my work on the project.
\renewcommand*{\bibname}{References}
\bibliographystyle{plain}
\bibliography{References}

\end{document}